\documentclass[a4paper,12pt]{article}

\usepackage[utf8]{inputenc}
\usepackage[T1]{fontenc}
\usepackage{amsmath,amssymb,amsfonts,amsthm,mathrsfs}
\usepackage{geometry}
\usepackage[english]{babel}
\usepackage{tikz}
\usetikzlibrary{decorations.markings}

\newenvironment{resume}[1]{
	\begin{list}{}{
		\setlength{\leftmargin}{1cm}
		\setlength{\rightmargin}{1cm}
	}\item[]
	{\bf #1}
	}{\end{list}}

\theoremstyle{plain}
\newtheorem{df}{Definition}
\newtheorem{thm}{Theorem}
\newtheorem{lem}{Lemma}
\newtheorem{prop}{Proposition}

\newtheorem{rmq}{Remark}
\newtheorem{cor}{Corollary}
\newtheorem*{thm*}{Theorem}

\newcommand\mb{\mathbb}
\newcommand\mc{\mathcal}
\newcommand\mf{\mathfrak}
\newcommand\mr{\mathrm}
\newcommand\ms{\mathscr}

\newcommand\F{\mc{F}}
\newcommand\G{\mc{G}}
\newcommand\C[1]{\mb{C}^{#1}}
\newcommand\OC[1]{\mc{O}_{(\C{#1},0)}}
\newcommand\mC[1]{\mf{m}_{(\C{#1},0)}}
\newcommand\Df[1]{\mr{Diff}(\C{#1},0)}
\newcommand\Cg[1]{(\C{#1},0)}

\title{
Pairs of Morse functions
}

\author{Olivier Thom}
\date{}

\begin{document}
\maketitle

\begin{resume}{Abstract.}
The goal of this paper is to classify pairs of Morse functions in general position modulo the action of different groups.
In particular, we obtain the classification of generic pairs of Morse functions, with or without target diffeomorphisms, and that of quotients of Morse functions.

We will also present a lemma which gives a sufficient condition for two pairs of functions to be conjugated.
\end{resume}

\section{Introduction}

Throughout this paper, we will denote by $\OC{n}$ the set of germs at $0$ of holomorphic functions on $\C{n}$ and by $\mC{n}$ its maximal ideal.
We will also use the notation $X\cdot f = d_X f$ to mean the derivative of $f$ in the direction given by the holomorphic vector field $X$.

For a group $\ms{S}$ acting on pairs of functions $f,g\in \OC{n}$, we will say that two pairs $p_1$ and $p_2$ are $\ms{S}$-conjugated or $\ms{S}$-equivalent if there exists $\varphi\in\ms{S}$ such that $\varphi\cdot p_1=p_2$.
In this paper we will consider the groups $\ms{R}, \ms{A}, \ms{F}, \ms{Q}$ which follows:
$\ms{R}=\Df{n}$ acting by composition at the source, $\ms{A}=\Df{n}\times\Df{2}$ acting by composition at the source and at the target, $\ms{F}=\Df{n}\times(\Df{})^2$ acting by $(\varphi,\psi_1,\psi_2)\cdot (f,g) = (\psi_1\circ f,\psi_2\circ g)\circ \varphi^{-1}$, and $\ms{Q} = \Df{n}\rtimes\OC{n}^*$ acting by $(\varphi,U)\cdot p=Up\circ \varphi^{-1}$.
Classification of pairs of functions up to $\ms{F}$-equivalence corresponds to the classification of pairs of foliations up to diffeomorphism at the source; classification of pairs of functions $(f,g)$ up to $\ms{Q}$-equivalence corresponds to the classification of meromorphic functions $f/g$ up to diffeomorphism at the source.

Let $f$ and $g$ be two Morse functions on $(\C{n},0)$ whith quadratic parts $q_f$ and $q_g$.
Denote by $\F$ and $\G$ the foliations given by the level sets of $f$ and $g$.
Denote also by $I(f,g)$ the tangency ideal between $f$ and $g$, that is the ideal of $\OC{n}$ spanned by $(\partial_{x_i}f \partial_{x_j}g- \partial_{x_j}f \partial_{x_i}g)_{i,j}$ for a set of coordinates $(x_i)$ and by $\mr{Tang}(f,g) = \mr{Tang}(\F,\G)$ the set of zeroes of $I(f,g)$, which we will name the tangency locus between $f$ and $g$. 
\\

We will begin by giving the classification up to $\ms{R}$-equivalence of pairs of Morse functions, but first, let us recall the well-known classification of pairs of quadratic forms on $\C{n}$ (cf \cite{hp}).
Seen as matrices, two nondegenerate forms $q_f$ and $q_g$ can be simultaneously diagonalized by blocks with blocks
\[ \left(\begin{aligned}
&(0) & & & & & 1\\
& & & &\text{\reflectbox{$\ddots$}} & & \\ 
& &\text{\reflectbox{$\ddots$}} & & & & \\
&1 & & & & &(0)\\
\end{aligned} \right)
\quad\text{ and }\quad
\left(\begin{aligned}
&(0) & & & & & \lambda\\
& & & &\text{\reflectbox{$\ddots$}} & & 1\\
& &\text{\reflectbox{$\ddots$}} & &\text{\reflectbox{$\ddots$}} & & \\
&\lambda &1 & & & &(0)\\
\end{aligned} \right).
\]

As an example, take the quadratic forms given by the matrices
\[\left(\begin{aligned} &0 &1\\ &1 &0\\ \end{aligned}\right)\quad \text{and}\quad \left(\begin{aligned} &0 &1\\ &1 &1\\ \end{aligned}\right):\]
$f=2xy$ and $g=2xy+y^2$.
We see that this pair cannot be simultaneously diagonalized.

Nevertheless, counting the parameters in the diagonalization by blocks we see that a generic (outside a set of codimension $1$) pair of quadratic forms $(q_f,q_g)$ can be simultaneously diagonalized.

Morse theorem (\cite{morse}) allows us to assume without loss of generality that $f=\sum{x_i^2}$.
Moreover we suppose that $q_f$ and $q_g$ are in generic position: $q_f(x) = \sum{x_i^2}$, $q_g(x) = \sum{\lambda_i x_i^2}$ with $\lambda_i\neq \lambda_j\neq 0$ if $i\neq j$ up to a linear change of coordinate.

Next, look at the tangency locus between the foliations $\F$ and $\G$: if $f$ and $g$ were diagonal quadratic forms, this would be the reunion of the coordinate axes.
In general, if $q_f$ and $q_g$ are diagonal, it is diffeomorphic and tangent to the reunion of the axes so we can suppose that it is exactly the reunion of the axes; this will be detailed further.

For example, in the case $n=2$ the functions $f=x^2+y^2$ and $g=x^2 + 2y^2$ give the following real phase portrait:

\begin{center}
\begin{tikzpicture}
\draw (0,0) circle (1);
\draw (0,0) circle (2);
\draw (0,0) ellipse (2 and 1);
\draw (0,2.5) -- (0,-2.5);
\draw (-2.5,0) -- (2.5,0);
\draw (2.5,0) node[above] {$T_1$};
\draw (0,2.5) node[right] {$T_2$};
\end{tikzpicture}
\end{center}

If we name the axes $T_j$ as in the picture, we can look at the restriction of each function to each tangency curve, which gives couples $(f\vert_{T_j},g\vert_{T_j})$ for each $j$.
If $\Phi$ is a diffeomorphism of $(\C{n},0)$ stabilizing the $T_j$'s, we have $((f\circ \Phi)\vert_{T_j},(g\circ \Phi)\vert_{T_j})= (f\vert_{T_j},g\vert_{T_j})\circ (\Phi\vert_{T_j})$ so that each couple $(f\vert_{T_j},g\vert_{T_j})$ up to diffeomorphism on the right gives an invariant for the $\ms{R}$-equivalence of pairs of functions.

Hence, if $C_0$ and $C_1$ are smooth curves, we will say that two couples $(u_0,v_0)$ and $(u_1,v_1)$ with $u_j,v_j \in \mc{O}(C_j,0)$ are conjugated under the action of $\mr{Diff}(C_0,C_1)$ on the right if there exists $\psi\in \mr{Diff}(C_0,C_1)$ such that $(u_0,v_0) = (u_1,v_1)\circ \psi$.

These invariants are enough to classify the pairs of Morse functions up to $\ms{R}$-equivalence, as stated in the theorem :

\begin{thm*}
Let $(f_0,g_0)$ and $(f_1,g_1)$ be two pairs of Morse functions on $(\C{n},0)$ with quadratic parts $(q_{f_i},q_{g_i})$ in generic position.
Suppose that we can number the tangency curves $T_j^i$ ($j=1,\ldots,n$ and $i=0,1$) in such a manner that the pairs of Morse functions $(f_i\vert_{T_j^i},g_i\vert_{T_j^i})$ are conjugated under the action of $\mr{Diff}(T_j^0,T_j^1)$ on the right.
Then $(f_0,g_0)$ and $(f_1,g_1)$ are $\ms{R}$-equivalent.
\end{thm*}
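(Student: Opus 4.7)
My plan is to reduce the statement to a situation where a general matching lemma can be applied, via three successive normalizations.

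First, I would put both pairs into a common quadratic normal form. Morse's theorem applied to $f_0$ and $f_1$, followed by the simultaneous diagonalization of a generic pair of quadratic forms recalled in the introduction, lets me arrange $q_{f_0}=q_{f_1}=\sum x_i^2$ and $q_{g_0}=\sum\lambda_i^0 x_i^2$, $q_{g_1}=\sum\lambda_i^1 x_i^2$ with distinct nonzero eigenvalues. The $2$-jets of the conjugations $\psi_j$ on each tangency curve force $\lambda_j^0=\lambda_j^1$ up to reindexing, so I may also assume $q_{g_0}=q_{g_1}$.

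Second, I would straighten the tangency loci. Under the genericity assumption, $\mr{Tang}(f_i,g_i)$ is a union of $n$ smooth irreducible curves, each tangent at the origin to a distinct coordinate axis. A diffeomorphism tangent to the identity at first order (and adjustable at higher order so as to fix the $2$-jets of $f_i$ and $g_i$) straightens each of these curves, so I may assume $\mr{Tang}(f_0,g_0)=\mr{Tang}(f_1,g_1)=T_1\cup\cdots\cup T_n$ with $T_j$ the $j$-th coordinate axis.

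Third, I would use the hypothesis to match the restrictions on each $T_j$. The diffeomorphisms $\psi_j$ conjugating $(f_0|_{T_j},g_0|_{T_j})$ to $(f_1|_{T_j},g_1|_{T_j})$ have $1$-jet $\pm\mr{id}$ (since they must preserve the leading term $x_j^2$), so they extend to a single diffeomorphism $\Phi$ of $\Cg{n}$ that preserves every $T_j$ and preserves the quadratic parts. Replacing $(f_0,g_0)$ by $(f_0\circ\Phi^{-1},g_0\circ\Phi^{-1})$, I may then assume the stronger condition that $f_0|_{T_j}=f_1|_{T_j}$ and $g_0|_{T_j}=g_1|_{T_j}$ on every tangency curve.

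Finally, I would invoke the lemma announced in the abstract, which in our setting asserts that two pairs of Morse functions sharing the same quadratic parts, the same tangency locus, and the same restrictions to it are $\ms{R}$-equivalent. This is where I expect the main obstacle to lie: the natural path-method attempt via $(f_t,g_t)=((1-t)f_0+tf_1,(1-t)g_0+tg_1)$ asks for a time-dependent vector field $X_t$ satisfying $X_t\cdot f_t=f_0-f_1$ and $X_t\cdot g_t=g_0-g_1$ simultaneously, and solving this homological system requires controlling the tangency ideal $I(f_t,g_t)$ uniformly in $t$ through a Saito-type division argument, using crucially that both right-hand sides vanish on $T_1\cup\cdots\cup T_n$ by the normalization achieved in the third step.
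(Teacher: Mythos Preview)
Your outline misses the simplification that makes the paper's argument work: arranging $f_0=f_1$ \emph{exactly}, not merely up to $2$-jet. The key lemma (Lemma~\ref{key_lemma}) is stated for a \emph{single} function $f$ together with two functions $g_0,g_1$ satisfying $I(f,g_0)=I(f,g_1)$ and $g_1-g_0\in I(f,g_0)$; it does not let both components of the pair vary. The paper secures this via Proposition~\ref{prop_locus2}, which straightens $\mr{Tang}(f,g)$ onto the coordinate axes by a diffeomorphism that \emph{preserves $f$ exactly}. Applied to each pair after Morse's theorem, this yields $f_0=f_1=\sum x_i^2$ with both tangency loci equal to the union of the axes. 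The conjugations $\psi_j$ on $T_j$ then satisfy $\psi_j(x_j)^2=x_j^2$, so $\psi_j=\pm\mathrm{id}$, and a sign change $(x_1,\ldots,x_n)\mapsto(\pm x_1,\ldots,\pm x_n)$ (still preserving $f$ and the axes) absorbs this. One is left with $f_0=f_1$ globally and $g_0=g_1$ on the common tangency locus; radicality of $I(f,g_i)$ (Proposition~\ref{prop_radical}) then gives $g_1-g_0\in I(f,g_0)=I(f,g_1)$, and the key lemma applies with only $g_t$ moving.

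By contrast, your steps 2 and 3 only control the $2$-jets of $f_i$, so after them $f_0\neq f_1$ in general. The lemma you invoke in the last paragraph (``same quadratic parts, same tangency locus, same restrictions'') is \emph{not} the key lemma proved in the paper. The two-function path $(f_t,g_t)$ you propose requires solving $X_t\cdot f_t=f_0-f_1$ and $X_t\cdot g_t=g_0-g_1$ simultaneously, a genuinely harder division problem that the paper's Moser argument does not address. The fix is straightforward: replace your step~2 by the $f$-preserving straightening of Proposition~\ref{prop_locus2}; your step~3 then collapses to $\psi_j=\pm\mathrm{id}$, and the final appeal is to the one-function key lemma as written.
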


As a consequence, if two pairs of Morse functions with quadratic parts in generic position are topologically conjugated, they are analytically conjugated.
\\

A part of the proof of this theorem is in fact quite general and is expressed as a separate lemma (the key lemma in what follows); the section \ref{sec_key_lemma} is devoted to the statement and proof of this lemma.
The next section (section \ref{sec_functions}) handles the $\ms{R}$-classification of pairs of Morse functions.

After the $\ms{R}$-classification of pairs of Morse functions, the $\ms{A}$-classification and the $\ms{F}$-classification are just a matter of rewriting as it will be shown later; these are done in sections \ref{sec_foliations} and \ref{sec_A_classification}.
The $\ms{Q}$-classification of pairs of Morse functions is not a straightforward consequence of the former theorem; the main result is that a generic pair $(f,g)$ of Morse functions is determined up to the action of $\ms{Q}$ by the $3$-jets of $f$ and $g$, so that a generic quotient of Morse functions is diffeomorphic to an explicit rational function of degree $3$.
This will be detailed in section \ref{sec_quotients}.

We will also show that the restriction of a generic Morse function to a quadratic cone (the set of zeroes of a Morse function) is determined up to diffeomorphism by its quadratic part (in section \ref{sec_restrictions}).

In the last section, we will show that the key lemma can be applied in a general setting, by rediscovering classical results like the classification of folds, or giving finite determinacy results.
As an example, we will give the classification of some special pairs of cusps.

Some of these problems can be restated in terms of diagrams in the sense of Dufour (cf. \cite{dufour}): the $\ms{F}$-classification of pairs of Morse functions corresponds to the classification of divergent diagrams of Morse functions
\begin{center}
\begin{tikzpicture}
\draw (0,0) node[left] {$\Cg{n}$};
\draw (2,1) node[right] {$\Cg{}$};
\draw (2,-1) node[right] {$\Cg{}.$};
\draw[->] (0,0.1) -- (2,1);
\draw[->] (0,-0.1) -- (2,-1);
\draw (1,0.5) node[above] {$f$};
\draw (1,-0.5) node[above] {$g$};
\end{tikzpicture}
\end{center}

We should also mention the work of J. Vey about a similar problem: the simultaneous reduction of a Morse function and a volume form (cf. \cite{vey}).

\section{Proof of the key lemma}
\label{sec_key_lemma}

In this section we want to prove the following:

\begin{lem}[Key Lemma]
\label{key_lemma}
Let $f$, $g_0$ and $g_1$ be three functions on $(\C{n},0)$ where $f$ has a singular point at $0$.
Suppose that the tangency ideals $I(f,g_0)$ and $I(f,g_1)$ are equal and that $g_1-g_0\in I(f,g_0)$.
Then $(f,g_0)$ and $(f,g_1)$ are $\ms{R}$-conjugated.
\end{lem}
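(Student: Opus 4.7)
The approach will be the path method à la Moser. Define $g_t := (1-t)g_0 + tg_1$ and let $h := g_1 - g_0 \in J := I(f, g_0)$. The goal is an analytic family of germs of diffeomorphisms $\varphi_t$ at $0$, with $\varphi_0 = \mr{id}$, such that $\varphi_t^* f = f$ and $\varphi_t^* g_t = g_0$; evaluation at $t=1$ then yields the $\ms{R}$-conjugation between $(f, g_0)$ and $(f, g_1)$. Writing $\varphi_t$ as the time-$t$ flow of a time-dependent vector field $X_t$, these invariance conditions reduce to the infinitesimal equations $X_t \cdot f = 0$ and $X_t \cdot g_t = -h$. The Koszul-style vector fields $X_{ij} := (\partial_i f)\partial_j - (\partial_j f)\partial_i$ vanish at $0$, satisfy $X_{ij}\cdot f = 0$, and act on any function $g$ by $X_{ij}\cdot g = \partial_i f\, \partial_j g - \partial_j f\, \partial_i g$, giving exactly the generators of $I(f,g)$. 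Consequently, any element of $I(f, g_t)$ can be expressed as $X \cdot g_t$ for some $X$ tangent to $f$ and vanishing at $0$, and the proof reduces to the algebraic claim that $h \in I(f, g_t)$ for every $t \in [0,1]$.

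This claim in turn follows once we establish $I(f, g_t) = J$ for all $t$. Writing $\mu_{ij}^g := \partial_i f\,\partial_j g - \partial_j f\,\partial_i g$ for the generators, the hypothesis $I(f, g_1) = J$ gives $\nu_{ij} := \mu_{ij}^{g_1} - \mu_{ij}^{g_0} = \partial_i f\,\partial_j h - \partial_j f\,\partial_i h \in J$. Consequently $\mu_{ij}^{g_t} = \mu_{ij}^{g_0} + t\nu_{ij} \in J$, so the inclusion $I(f, g_t) \subseteq J$ is automatic. The reverse inclusion is the principal obstacle: the plan is a Nakayama argument for the finitely generated $\OC{n}$-module $M := J / I(f, g_t)$. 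Since $\mu_{ij}^{g_0} \equiv -t\nu_{ij} \pmod{I(f, g_t)}$, it suffices to verify that $\nu_{ij}$ lies in $\mC{n}\cdot J$, for then $M = \mC{n}\cdot M$ and Nakayama forces $M = 0$. Establishing $\nu_{ij} \in \mC{n}\cdot J$ is exactly where the singularity of $f$ enters (it ensures $\partial_i f \in \mC{n}$, so the perturbations $\nu_{ij}$ automatically lie in $\mC{n}$) and is combined with the constraint on $h$ imposed by the hypothesis $I(f, g_1) = I(f, g_0)$; concretely one expands $h = \sum_\alpha c_\alpha \mu_\alpha^{g_0}$ and splits $\nu_{ij}$ into a ``derivative-of-coefficient'' part (trivially in $\mC{n}\cdot J$) and a ``derivative-of-generator'' part, whose containment in $\mC{n}\cdot J$ is forced by the hypotheses through careful accounting.

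Once $h \in I(f, g_t)$ is secured, write $h = \sum_\alpha c_\alpha(t)\mu_\alpha^{g_t}$ analytically in $t$ and set $X_t := -\sum_\alpha c_\alpha(t) X_\alpha$. Each $X_\alpha$ vanishes at $0$, so $X_t(0) = 0$ and the origin is a fixed point of the associated non-autonomous flow; this flow exists as a germ of diffeomorphism for $t \in [0,1]$ on a neighborhood of $0$, and its time-one map is the diffeomorphism $\varphi_1$ fulfilling $\varphi_1^* f = f$ and $\varphi_1^* g_1 = g_0$, providing the $\ms{R}$-conjugation.
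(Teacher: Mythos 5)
Your skeleton (Moser path $g_t=g_0+t(g_1-g_0)$, the Koszul fields $X_{ij}=(\partial_i f)\partial_j-(\partial_j f)\partial_i$ which annihilate $f$ and vanish at $0$, a vector field built from a decomposition of $h=g_1-g_0$ in the tangency ideal) is exactly the paper's, but the middle of your argument has a genuine error: the claim that $I(f,g_t)=I(f,g_0)$ for \emph{every} $t\in[0,1]$ is false, and the Nakayama argument you sketch for it breaks precisely at the step you defer to ``careful accounting.'' Concretely, take $f=xy$, $g_0=x^2$, $g_1=-x^2$ on $\Cg{2}$: then $f$ is singular at $0$, $I(f,g_0)=I(f,g_1)=\langle x^2\rangle$ and $g_1-g_0=-2x^2\in I(f,g_0)$, so all hypotheses of the lemma hold; yet $g_{1/2}=0$ and $I(f,g_{1/2})=0\neq\langle x^2\rangle$. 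In your notation $\nu=4x^2$: it lies in $\mC{2}$ (because $f$ is singular) and it lies in your $J=\langle x^2\rangle$, but it does \emph{not} lie in $\mC{2}\cdot J=\langle x^3,x^2y\rangle$ --- membership in $\mC{n}\cap J$ is strictly weaker than membership in $\mC{n}\cdot J$, and only the latter feeds Nakayama. No accounting can close this gap, since the statement it would prove is false. The paper establishes only the true, weaker fact --- $I_t\subseteq I_0$ always, and $I_0/I_t$ is supported on finitely many parameters $t$, so $I_t=I_0$ for generic $t$ --- and then disposes of the bad parameters by a trick you would also need: choose $t_0\in\C{}$ so that the two \emph{complex} segments $[0,t_0]$ and $[t_0,1]$ avoid the finite bad set, and conjugate in two steps, $(f,g_0)\simeq(f,g_{t_0})\simeq(f,g_1)$.

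There is a second gap, independent of the first: joint analyticity in $(t,x)$. Even at parameters where $h\in I(f,g_t)$ holds, your final step ``write $h=\sum_\alpha c_\alpha(t)\mu_\alpha^{g_t}$ analytically in $t$'' assumes exactly what must be proven. Fixed-$t$ membership produces coefficients in $\OC{n}$ separately for each $t$, with no regularity in $t$ and no uniform domain of convergence; to integrate the non-autonomous field over $[0,1]$ you need $c_\alpha(t,x)$ holomorphic on a full neighborhood of $[0,1]\times\{0\}$ in $\C{}_t\times\C{n}$. This is where the bulk of the paper's proof lives: it first shows the ideal generated by the $h_{ij}(t,\cdot)$ localizes correctly at each $t_0$ (using that the $h_{ij}$ are affine in $t$, one writes $H(t)=(id+(t-t_0)B)H(t_0)$ with $id+(t-t_0)B$ invertible near $t_0$), and then globalizes by coherence (Oka's theorem) on a Stein neighborhood to produce global holomorphic $r_{ij}(t,x)$ with $g_1-g_0=\sum_{i<j}r_{ij}h_{ij}$. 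With such $r_{ij}$ in hand, your concluding flow argument is sound and agrees with the paper's explicit choice $X^j_i=r_{ij}\,\partial_{x_j}f$; without them (or some substitute), it does not get off the ground.
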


The proof of this lemma is based on Moser's path method: we will construct a path $(f,g_t)$ between $(f,g_0)$ and $(f,g_1)$ and show that every $(f,g_t)$ are diffeomorphic.
Put $g_t=g_0 + t(g_1-g_0)$ and $g(t,\cdot) = g_t(\cdot)\in\mc{O}(U)$ for a neighborhood $U$ of $[0,1]\times\{0\}$ in $\mb{C}_t\times\mb{C}^n$.
Introduce also $I = I(f,g)$ (which is an ideal of $\mc{O}(U)$) and for each $t$, $I_t = I(f,g_t)$ (which is an ideal of $\OC{n}$).
Write finally $d_xf\wedge d_xg = \sum_{i<j}{h_{ij}dx_i\wedge dx_j}$ for a system of coordinates $(x_i)$ on $\C{n}$, $J = \langle h_{ij}\rangle_{i<j}$ and note that $I_t = \langle h_{ij}(t,\cdot)\rangle_{i<j}$.

We will first study these ideals to show that $J=I_0\otimes_{\mc{O}_x} \mc{O}(U)$ where $\mc{O}_x$ denotes the set of germs of holomorphic functions in the variables $x_1,\ldots,x_n$.

\begin{prop}
Suppose $I_0=I_1$, then $I_0=I_t$ for $t$ generic.
\end{prop}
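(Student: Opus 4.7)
The plan is to exploit the fact that $g_t$ depends affinely on $t$ to reduce the claim to a piece of linear algebra about the generators $h_{ij}(t)$. Since $\partial_{x_k}g_t = (1-t)\partial_{x_k}g_0 + t\,\partial_{x_k}g_1$, a direct expansion gives
\[ h_{ij}(t) = (1-t)\,h_{ij}(0) + t\,h_{ij}(1). \]
Thus each generator of $I_t$ is a $\mb{C}$-linear combination of the $h_{ij}(0)$'s and $h_{ij}(1)$'s, and the hypothesis $I_0 = I_1$ immediately yields $I_t \subset I_0$ for every $t\in\mb{C}$.

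For the reverse inclusion, the same hypothesis furnishes relations $h_{ij}(1) = \sum_{k<l} a_{ij,kl}\,h_{kl}(0)$ with $a_{ij,kl}\in\OC{n}$; such coefficients are not canonical because of syzygies, but any choice will do. Substituting into the formula above yields $h_{ij}(t) = \sum_{k<l} M_{ij,kl}(t)\,h_{kl}(0)$, where $M(t) = (1-t)\,\mr{Id} + tA$ is a square matrix of size $\binom{n}{2}$ with entries in $\OC{n}[t]$.

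The decisive observation is that $M(0) = \mr{Id}$, so $\det M(t)$ is a polynomial in $t$ with coefficients in $\OC{n}$ whose constant term is $1$. Evaluating at $x = 0$ produces a polynomial in $t$ taking the value $1$ at $t = 0$, hence a nonzero polynomial, so $\det M(t)$ is a unit in $\OC{n}$ for all $t$ outside a finite subset $\Sigma\subset\mb{C}$. For such $t$, Cramer's rule provides $M(t)^{-1}$ with entries in $\OC{n}$, expressing each $h_{kl}(0)$ as an $\OC{n}$-linear combination of the $h_{ij}(t)$'s, which gives $I_0\subset I_t$ and the desired equality. The only conceivable obstacle would have been the failure of $M(0)$ to be invertible, but this is ruled out for free by the choice of path at $t = 0$; the rest of the argument is a formal consequence of the affine dependence on $t$.
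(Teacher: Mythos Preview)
Your proof is correct. The containment $I_t\subset I_0$ is obtained exactly as in the paper; for the reverse inclusion you give a direct linear-algebra argument via the matrix $M(t)=(1-t)\,\mathrm{Id}+tA$, whose determinant is a unit in $\OC{n}$ at $t=0$ and hence for all but finitely many $t$.

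The paper's own proof of this proposition is different in style: after establishing $I_t\subset I_0$ it observes that the quotient $I_0/I_t$ vanishes at $t=0$ and appeals to a support/semicontinuity argument to conclude that it vanishes for generic $t$. This is shorter but somewhat informal as stated. Your matrix argument is more concrete and self-contained; in fact it is essentially the device the paper introduces \emph{in the next proposition} (writing $H(t)=(\mathrm{id}+(t-t_0)B)H(t_0)$ and inverting for $t$ near $t_0$) to study the localisations $J_{(t_0)}$. So you have effectively anticipated that step and used it already here, which buys you an explicit description of the exceptional set $\Sigma$ as the zero locus of a polynomial in $t$, at the modest cost of choosing the non-canonical matrix $A$.
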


\begin{proof}
The tangency ideal $I_t$ is spanned by the components of $df\wedge dg_t = t df\wedge dg_1 + (1-t) df\wedge dg_0$ so it is contained in $I_0$.
But $I_0/I_t$ is null for $t=0$ so the support of $I_0/I_t$ can only consist of finitely many points, hence the result.
\end{proof}

In what follows, we will use the additional hypothesis that $I_t$ is constant along the interval $[0,1]$.
If this is not the case, we could find a point $t_0\in \C{}$ such that $I_t=I_0$ for each $t$ in both segments $[0,t_0]$ and $[t_0,1]$ (thanks to the previous proposition) and use what will follow on these segments to show that $(f,g_0)\simeq (f,g_{t_0})\simeq (f,g_1)$ so we can indeed suppose without loss of generality that $I_t$ is constant along $[0,1]$.

\begin{prop}
For each $t_0$, the localization $J_{(t_0)}$ of $J$ at $t_0$ satisfies $J_{(t_0)}=I_{0}\otimes_{\mc{O}_x}\C{}\{t-t_0,x\}$.
\end{prop}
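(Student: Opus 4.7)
The plan is to prove the equality by establishing both inclusions, the nontrivial one via Nakayama's lemma. Since the power-series extension $\C{}\{t-t_0,x\}$ is flat (indeed free) over $\mc{O}_x$, the tensor product $I_0\otimes_{\mc{O}_x}\C{}\{t-t_0,x\}$ identifies with the extended ideal $I_0\cdot R$, where $R:=\C{}\{t-t_0,x\}$; both members of the proposition are therefore ideals of the Noetherian local ring $R$.

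The inclusion $J_{(t_0)}\subset I_0\cdot R$ will follow directly from the affine dependence of $g_t$ on $t$: writing $g_t=g_0+t(g_1-g_0)$ gives
\[ h_{ij}(t,x)=h_{ij}(0,x)+t\,\tilde h_{ij}(x), \]
where the $\tilde h_{ij}$ are the components of $df\wedge d(g_1-g_0)$. The first summand lies in $I_0$ by definition, and since $I_1=I_0$ (hypothesis of the lemma), the difference $\tilde h_{ij}=h_{ij}(1,x)-h_{ij}(0,x)$ also lies in $I_0$. Hence every generator of $J_{(t_0)}$ already belongs to $I_0\cdot R$.

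For the reverse inclusion I would apply Nakayama's lemma to the finitely generated $R$-module $M:=(I_0\cdot R)/J_{(t_0)}$. As $t-t_0$ lies in the maximal ideal of $R$, it suffices to verify that $M/(t-t_0)M=0$. Tensoring the exact sequence $0\to J_{(t_0)}\to I_0\cdot R\to M\to 0$ with $R/(t-t_0)\simeq\mc{O}_x$, and using flatness of $R$ over $\mc{O}_x$ to identify $(I_0\cdot R)\otimes_R R/(t-t_0)$ with $I_0$, yields the right-exact sequence
\[ J_{(t_0)}/(t-t_0)J_{(t_0)}\longrightarrow I_0\longrightarrow M/(t-t_0)M\longrightarrow 0, \]
in which the image of the left arrow is precisely the ideal $\langle h_{ij}(t_0,\cdot)\rangle=I_{t_0}\subset\mc{O}_x$. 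The standing assumption $I_{t_0}=I_0$ then forces $M/(t-t_0)M=I_0/I_{t_0}=0$, whence $M=0$ by Nakayama.

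The one delicate point, which I expect to be the main obstacle, is ensuring that the equality $I_{t_0}=I_0$ really holds at the chosen $t_0$: this is automatic for $t_0\in[0,1]$ thanks to the additional hypothesis made just before the proposition, and it holds off a discrete set of values of $t_0$ by the previous proposition, which is all that the subsequent Moser path method will require.
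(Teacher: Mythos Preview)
Your argument is correct. The paper's own proof reaches the same conclusion by a closely related but more hands-on route: instead of invoking Nakayama, it writes the vector of generators $H(t)=(h_{ij}(t,\cdot))$ affinely as $H(t)=\bigl(\mathrm{id}+(t-t_0)B\bigr)H(t_0)$ for some constant matrix $B$ (using $I_1=I_{t_0}$ to express $H(1)$ in terms of $H(t_0)$), and then observes that $\mathrm{id}+(t-t_0)B$ is invertible over $\C{}\{t-t_0,x\}$, so the two families of generators span the same ideal. Your Nakayama step is exactly this matrix invertibility dressed in abstract commutative algebra: the vanishing of $M/(t-t_0)M$ is the statement that $H(t_0)$ already generates modulo $(t-t_0)$, and the conclusion $M=0$ is the invertibility of the unipotent matrix. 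The paper's version is a shade more elementary and gives the explicit change of generators; your version is cleaner and would adapt more readily to situations with less explicit generators.
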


\begin{proof}
It is enough to prove that $J_{(t_0)} = I_{t_0}\otimes\C{}\{t-t_0,x\}$ because $I_{t_0}=I_0$.

Note first that the $h_{ij}$ are affine in $t$ so that $h_{ij}(t) = h_{ij}(t_0) + \frac{t-t_0}{1-t_0}(h_{ij}(1)-h_{ij}(t_0))$ (we supposed that $t_0\neq 1$, the case $t_0=1$ can be done similarly).
Denote by $H(t)$ the vector $(h_{ij}(t))_{i<j}$; the hypothesis that $I_1=I_{t_0}$ then gives a matrix $A$ with constant coefficients such that $H(1)=AH(t_0)$.
Hence the existence of a matrix $B$ satisfying $H(t) = (id + (t-t_0)B)H(t_0)$.

For $t$ near $t_0$, the matrix $id+(t-t_0)B$ is invertible so the components of the vectors $H(t)$ and $H(t_0)$ span the same germ of ideal around the point $t_0$.
Note finally that the germ of ideal spanned by the components of $H(t_0)$ is $I_0\otimes\C{}\{t-t_0,x\}$.
\end{proof}

As a corollary, for each point $p_0 = (t_0,x_0)\in U\subset\C{}_t\times\C{n}$, we have the relation $J_{(p_0)} = (I_0)_{(x_0)}\otimes_{\C{}\{x-x_0\}}\C{}\{t-t_0,x-x_0\}$.

\begin{prop}
\label{prop_coherence}
$J=I_0\otimes_{\mc{O}_x}\mc{O}(U)$.
\end{prop}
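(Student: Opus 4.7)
The plan is to upgrade the stalkwise identity from the previous corollary to a global identity of ideals in $\mc{O}(U)$ using coherence of analytic sheaves on a Stein neighborhood.

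I would first dispose of the easy inclusion. Writing $g_t = (1-t)g_0 + tg_1$ yields $h_{ij}(t,x) = (1-t)h_{ij}(0,x) + t h_{ij}(1,x)$, and under the hypothesis $I_1 = I_0$ both $h_{ij}(0,\cdot)$ and $h_{ij}(1,\cdot)$ lie in $I_0$; hence each $h_{ij}$ belongs to $I_0 \otimes_{\mc{O}_x}\mc{O}(U)$, giving $J \subseteq I_0 \otimes_{\mc{O}_x}\mc{O}(U)$. The content of the proposition is the reverse inclusion: given a generator $e$ of $I_0$, one must write it as an $\mc{O}(U)$-combination of the $h_{ij}$ with coefficients depending holomorphically on $t$ throughout the segment $[0,1]$, whereas the previous corollary only provides such a representation locally around each point $p_0 \in U$.

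To globalize I would pass to sheaves. Let $\pi : U \to \C{n}$ be the projection onto the $x$-coordinates, let $\tilde{I}_0$ denote the coherent ideal sheaf on a neighborhood of $0 \in \C{n}$ whose stalk at $0$ is $I_0$, and consider the two coherent ideal subsheaves of $\mc{O}_U$: the sheaf $\mc{J}$ generated by the $h_{ij}$, and the pullback ideal sheaf $\mc{I} = \tilde{I}_0\cdot\mc{O}_U$. The previous corollary is precisely the statement that $\mc{J}_{p_0} = \mc{I}_{p_0}$ at every point of $U$, and hence $\mc{J} = \mc{I}$ as subsheaves of $\mc{O}_U$.

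To conclude I would shrink $U$ to a Stein open neighborhood of $[0,1]\times\{0\}$; such neighborhoods exist (for instance a polydisc in $\C{n+1}$ containing the segment). On a Stein space, Cartan's theorem A applied to the coherent sheaves $\mc{J}$ and $\mc{I}$ identifies their global sections with the ideals generated by their obvious global generators, giving $\mc{J}(U) = J$ and $\mc{I}(U) = I_0 \otimes_{\mc{O}_x}\mc{O}(U)$; the sheaf identity then yields the proposition. The main subtle point, and the place where the argument would fail if handled naively, is precisely this passage from stalkwise to global equality of finitely generated ideals: it rests on the coherence of the sheaves involved together with the Stein property of $U$, without which local representations along $[0,1]$ could not be glued into a single one with holomorphic coefficients on $U$.
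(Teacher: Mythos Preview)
Your approach is essentially the same as the paper's: both pass to coherent ideal sheaves on a Stein neighborhood, invoke the stalkwise equality established just before, and then globalize. One small correction of attribution: the step ``global sections of $\mc{J}$ coincide with the ideal generated in $\mc{O}(U)$ by the $h_{ij}$'' is Cartan's Theorem~B (vanishing of $H^1$ of the relation sheaf), not Theorem~A; the paper phrases the same step via Oka's coherence theorem and the reference to H\"ormander.
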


\begin{proof}
We can suppose that the neighborhood $U$ is Stein.
The ideal $J$ (resp. $I_0\otimes\mc{O}(U)$) defines a sheaf of ideals $\ms{J}$ (resp. $\ms{K}$) defined by $\ms{J}_{(p_0)} = J_{(p_0)}$ for $p_0\in U$ (resp. $\ms{K}_{(p_0)} = (I_0)_{(x_0)}\otimes\C{}\{t-t_0,x-x_0\}$ for $p_0=(t_0,x_0)\in U$).
These sheaves are locally of finite type; if $a_1,\ldots,a_k$ are local sections of $\ms{J}$ (resp. $\ms{K}$), the sheaf of relations $\mc{R}(a_1,\ldots,a_k)$ may be viewed as the relations of the sections $a_i$ of the sheaf $\mc{O}$.
Hence by Oka's theorem (see for example \cite{hormander}), $\mc{R}(a_1,\ldots,a_k)$ is locally of finite type and $\ms{J}$ and $\ms{K}$ are coherent.

Take $a\in I_0\otimes\mc{O}(U)$, then $a_{(p)}\in \ms{K}_{(p)} = \ms{J}_{(p)}$ for each $p\in U$; since $U$ is Stein and since the global sections $h_{ij}$ span $\ms{J}$ locally, there exists holomorphic $r_{ij}\in \mc{O}(U)$ such that $a=\sum{r_{ij}h_{ij}}$, ie. $a\in J$ (cf. \cite{hormander}).

The converse works in the same way with $h_{ij}(0,\cdot)$ as global sections spanning $\ms{K}$ locally.
\end{proof}

Moreover, if $g_1-g_0\in I_0$ as in the hypotheses of the lemma, $g_1-g_0\in J$ by the former proposition, so $J$ is also equal to $I$ because $df\wedge dg = d_xf\wedge d_xg + (g_1-g_0)df\wedge dt$.

Now we can prove the key lemma:

\begin{proof}[Proof of the key lemma]
As noted above, the hypothesis $g_1 -g_0\in I_0$ together with proposition \ref{prop_coherence} means that there exists holomorphic $r_{ij}(t,x)$ (for $i<j$) such that $g_1 - g_0 = \sum_{i<j}{r_{ij}h_{ij}}$.

To use the path method, we need to find a vector field $X = \sum_{i=1}^n X_i\partial_{x_i} + \partial_t$ defined in a neighborhood of $\{0\}\times [0,1]\subset \C{n}\times [0,1]$ such that $X\cdot f = X\cdot g = 0$.
We also want to have $X(0,t)=\partial_{t}$ so that the flow $\varphi_s(x,t)$ of $X$ will be defined on a neighborhood of $\{0\}\times[0,1]$.
The diffeomorphism $\varphi: x \mapsto \varphi_1(x,0)$ will then verify $(f\circ \varphi,g_0 \circ \varphi)=(f,g_1)$ on $\Cg{n}$.

Remember that 
\[ X\cdot f = \sum_{i=1}^n{X_i\partial_{x_i}f}\quad\text{and}\]
\[ X\cdot g = \sum_{i=1}^n{X_i\partial_{x_i}g_t} + (g_1-g_0).\]

Note that it is enough to find for each $j=2,\ldots,n$ a vector field $X^j$ satisfying $X^j\cdot f =0$ and
\[\sum_{i=1}^{n}{X^j_i \partial_{x_i}g_t} + \sum_{i=1}^{j-1}{r_{ij}h_{ij}} = 0\]
because the vector field $X = \sum_{j=2}^{n}{X^j} + \partial_{t}$ would then be as sought.

On $U_j = \{\partial_{x_j}f\neq 0\}$, we may impose
\[ X^j_j = \frac{-1}{\partial_{x_j}f}\left(\sum_{i\neq j}{(\partial_{x_i}f)X^j_i}\right) \]
so that
\begin{align*}
\left(\partial_{x_j}f\right) \left( \sum_{i=1}^{n}{X^j_i \partial_{x_i}g_t} + \sum_{i=1}^{j-1}{r_{ij}h_{ij}} \right) &= \sum_{i\neq j}{ \left(\partial_{x_j}f \partial_{x_i}g_t - \partial_{x_i}f \partial_{x_j}g_t \right)X^j_i} + (\partial{x_j}f) \left(\sum_{i=1}^{j-1}{r_{ij}h_{ij}} \right)\\
     &= \sum_{i\neq j}{-h_{ij}X^j_i} + (\partial_{x_j}f) \left(\sum_{i=1}^{j-1}{r_{ij}h_{ij}} \right).
\end{align*}

So we can choose $X^j_i = r_{ij} \partial_{x_j}f$ if $i<j$ and $X_i^j = 0$ for $i>j$ which gives $X^j_j = -\sum_{i< j}{r_{ij} \partial_{x_i}f}$.
We see that every component $X_i^j$ is holomorphic around $\{\partial_{x_j}f = 0\}$ which means that the vector field $X^j$ is defined on $(\C{n},0)\times[0,1]$.
Moreover, since $f$ is singular at $0$, every $\partial_{x_i}f$ cancels at $0$ so that each $X^j$ cancels on $\{0\}\times[0,1]$.

The vector field $X = \sum_j{X^j} + \partial_{t}$ is the one we wanted.
\end{proof}

\begin{rmq}
\label{rmq_lemma}
The hypothesis "$f$ has a singular point at $0$" is only used to show that the vector field $X - \partial_{t}$ cancels along the $t$-axis, which is also true if all the $r_{ij}$ cancel on $\{0\}\times[0,1]$.
It is also the case if $g_1-g_0$ cancels at a high enough order at the origin (the exact order depends on the coefficients $h_{ij}$).
\end{rmq}

\section{$\ms{R}$-classification of pairs of Morse functions}
\label{sec_functions}

A pair of Morse functions $(f,g)$ is called $\ms{R}$-generic if (up to linear isomorphism) the quadratic parts $q_f$ and $q_g$ are diagonal : $q_f(x) = \sum{x_i^2}$ and $q_g(x) = \sum{\lambda_ix_i^2}$ with $\lambda_i\neq \lambda_j$ if $i\neq j$.

Let $(f,g)$ be an $\ms{R}$-generic pair of Morse functions.
Let us first study the tangency loci: if $q_f$ and $q_g$ are diagonal, $\mr{Tang}(q_f,q_g)$ is the union of the coordinate axes; in general, we have the following:

\begin{prop}
\label{prop_locus1}
The sets $\mr{Tang}(f,g)$ and $\mr{Tang}(q_f,q_g)$ are diffeomorphic and tangent.
\end{prop}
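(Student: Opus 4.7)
The plan is to use the Morse lemma and a simultaneous diagonalization of $q_f,q_g$ to normalize coordinates so that $f=\sum x_i^2$ and $g=\sum\lambda_i x_i^2+R$ with $R\in\mC{n}^3$ and the $\lambda_i$ pairwise distinct and nonzero. In these coordinates the tangency locus is the set where $\nabla g(x)$ is proportional to $\nabla f(x)=2x$, equivalently the set of $x$ for which there exists $\mu\in\C{}$ with $\nabla g(x)=2\mu x$. The strategy is then to treat this as a nonlinear eigenvalue problem that can be resolved by a blow-up at the origin.

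First I would projectivize by writing $x=t\xi$; after dividing by $t$, the system becomes $\mr{diag}(\lambda_i)\xi=\mu\xi+O(t)$. At $t=0$ the only solutions are the $n$ pairs $(\xi,\mu)=(e_j,\lambda_j)$ by distinctness of the $\lambda_i$. In the affine chart $\xi_j=1$, the Jacobian of the residual system with respect to $(\xi_i)_{i\neq j}$ and $\mu$ is diagonal with entries $(\lambda_i-\lambda_j)_{i\neq j}$ and $-1$, hence invertible. The implicit function theorem then yields, for each $j$, a unique analytic branch $C_j\subset\mr{Tang}(f,g)$, smooth and tangent to the $j$-th coordinate axis (since the correction $\xi_i(t)$ for $i\neq j$ starts at order $t$). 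The uniqueness part of the implicit function theorem then gives $\mr{Tang}(f,g)=\bigcup_j C_j$ near $0$, which already settles the ``tangent'' half of the statement because each $C_j$ has the same tangent line at the origin as the corresponding coordinate axis.

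For the ``diffeomorphic'' half, parametrize $C_j$ by $\gamma_j(s)$ with $s$ placed in the $j$-th slot and all other entries of order $s^2$, and set
\[ \Phi(x) = x + \sum_{j=1}^n \bigl(\gamma_j(x_j)-x_j e_j\bigr). \]
Each summand vanishes to order $2$ at the origin, so $d\Phi(0)=\mr{id}$ and $\Phi$ is a local diffeomorphism; a direct computation shows $\Phi(se_j)=\gamma_j(s)$, so $\Phi$ maps the union of coordinate axes onto $\bigcup_j C_j$.

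The main obstacle is the first step: the ideal $I(f,g)$ has $\binom{n}{2}$ generators but is expected to cut out a curve, so one must check that this over-determined system really has the right structure. The projective reformulation makes this transparent, because the genericity hypothesis (distinctness of the $\lambda_i$) is exactly what ensures that the linearized eigenvalue problem has $n$ simple fixed points, after which the implicit function theorem takes care of the rest.
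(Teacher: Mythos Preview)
Your proof is correct and takes a more streamlined route than the paper's. Both arguments blow up the origin and identify the tangency locus by its trace on the exceptional divisor, but the paper works directly with the blown-up functions: it performs a further change $x_1 \mapsto \sqrt{\hat f}$ (valid away from the surface $S=\{1+x_2^2+\cdots=0\}$) to reduce to $\hat f = x_1^2$, $\hat g = x_1^2 u$, then argues that on the exceptional divisor the equation $dx_1\wedge du=0$ has $n$ simple solutions independent of the higher-order terms, and finally invokes a separate dimension count to conclude that each such point lifts to a single smooth curve transverse to the divisor. Your eigenvalue reformulation $\nabla g(x)=2\mu x$ is the main gain: it replaces the $\binom{n}{2}$ tangency equations by $n$ equations in one auxiliary unknown $\mu$, so that after the blow-up the implicit function theorem applies directly and delivers existence, smoothness and local uniqueness of the $n$ branches in one stroke, bypassing the square-root coordinate change and the discussion of $S$. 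Your explicit $\Phi$ also makes the ``diffeomorphic'' assertion concrete, whereas the paper leaves it as an immediate consequence of having $n$ smooth curves with the prescribed tangents. The one point worth tightening is the passage to $\mr{Tang}(f,g)=\bigcup_j C_j$: to rule out extraneous branches you should observe that the solution set at $t=0$ is exactly the $n$ isolated points $[e_j]$ on the compact divisor $\mb{P}^{n-1}$, so every branch of the tangency locus through $0$ must approach one of them, after which local uniqueness from the implicit function theorem finishes the job.
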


\begin{proof}
We can suppose $f$ quadratic and $q_g$ diagonal.
Blow up the origin to get that (recycling the coordinates $x_i$ as coordinates in the blow-up) the transforms of $f$ and $g$ are given by
\[\tilde{f} = x_1^2(1+x_2^2+\ldots +x_n^2) \quad\text{ and }\quad \tilde{g} = x_1^2(\lambda_1 + \lambda_2x_2^2+\ldots) + x_1^3(\ldots).\]
We will simultaneously compute the tangency locuses $\mr{Tang}(f,g)$ and $\mr{Tang}(q_f,q_g)$ in the blow-up to show this proposition (since we already know $\mr{Tang}(q_f,q_g)$, this will help understand $\mr{Tang}(f,g)$).
Write $\hat{f}=\tilde{f}=\tilde{q_f}$ and $\hat{g}=\tilde{q_g}+x_1^3\varepsilon$ with $\varepsilon=0$ or $\varepsilon=x_1^{-3}(\tilde{g}-\tilde{q_g})$.

Note that the genericity hypothesis on the $n$-uple $(\lambda_1,\ldots,\lambda_n)$ implies that $q_f$ and $q_g$ are not tangent near a point of the surface $\{f=0\}$ (exept at $0$).
In the blow-up, put $S:=\{1+x_2^2+\ldots+x_n^2=0\}$ and $E:=\{x_1=0\}$.
The remark above tells that the components of the tangency locus between $\tilde{q_f}$ and $\tilde{q_g}$ which are different from $E$ do not intersect $E\cap S$.
So this is the case for $\hat{f}$ and $\hat{g}$ independently of $\varepsilon$.
The change of coordinate $x_1\mapsto \sqrt{\hat{f}}$ is allowed near each point of $E$ far away from the hypersurface $S$ and every component of $\mr{Tang}(\hat{f},\hat{g})$ different from $E$ is far away from this hypersurface (note also that this change of coordinate does not depend on $\varepsilon$).

In these new local coordinates,
\[\hat{f}=x_1^2 \quad\text{and}\quad \hat{g}=x_1^2u = x_1^2(u_0+x_1 \varepsilon')\]
with $u_0$ not depending on $x_1$ and $\varepsilon'$ holomorphic far from $S$ ($\varepsilon'=0$ in case $\varepsilon=0$).
The tangency locus is the union of the varieties given by the equations $x_1=0$ and $dx_1\wedge du=0$.
But $dx_1\wedge du = dx_1\wedge (du_0+x_1d\varepsilon')$ so on the exceptional divisor, the solutions of $dx_1\wedge du=0$ are the same as the solutions of $dx_1\wedge du_0=0$.
So the solutions of $dx_1\wedge du=0$ on $E$ do not depend on $\varepsilon$, thus they are $n$ simple points corresponding to the axes.

Finally, remark that $dx_1\wedge du=0$ is given by $n-1$ equations so its solution set is of dimension at least $1$.
Each point $p$ solution of these equations on $E$ then gives rise to a set $T_p$ of dimension at least $1$, but $T_p\cap E = \{p\}$ so that $\mr{dim}(T_p) = 1$.
The fact that $p$ is a simple point means that $T_p$ is a simple smooth curve intersecting $E$ transversally.
Hence, before blowing up, there were $n$ simple smooth tangency curves tangent to the ones between $q_f$ and $q_g$, which in addition implies that $\mr{Tang}(f,g)$ is diffeomorphic to $\mr{Tang}(q_f,q_g)$.

\end{proof}

Even better :

\begin{prop}
\label{prop_locus2}
There exists a diffeomorphism $\phi$ which conjugates $\mr{Tang}(f,g)$ with $\mr{Tang}(q_f,q_g)$ and $f$ with $q_f$.
\end{prop}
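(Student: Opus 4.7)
The plan is to first apply Morse's lemma to reduce to the case $f = q_f$, and then to straighten the tangency curves to the coordinate axes by a Moser-type path argument that preserves $q_f$.

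First, by Morse's theorem there is a diffeomorphism $\psi$ of $(\C{n},0)$ with $f\circ\psi = q_f$; after replacing $g$ by $g\circ\psi$ and each $T_j$ by $\psi^{-1}(T_j)$ I may assume $f = q_f = \sum x_i^2$. Proposition~\ref{prop_locus1} then says that the tangency curves $T_j$ are smooth and tangent at $0$ to the coordinate axes $E_j = \{x_i = 0 : i\neq j\}$. On each $T_j$, the restriction $q_f|_{T_j}$ is a one-variable Morse function with quadratic part $x_j^2$, so the one-dimensional Morse lemma furnishes a parametrization $t \mapsto \gamma_j(t) = t e_j + O(t^2)$ of $T_j$ with $q_f(\gamma_j(t)) = t^2$. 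Since the tangent lines of the $T_j$ at $0$ are linearly independent, the assignments $t e_j \mapsto \gamma_j(t)$ extend to a local diffeomorphism $\phi_1$ of $(\C{n},0)$ with identity linear part; by construction $\tilde{f} := q_f \circ \phi_1$ is Morse with quadratic part $q_f$ and satisfies $\tilde{f}|_{E_j} = q_f|_{E_j}$ for every $j$.

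The statement then reduces to producing a diffeomorphism $\phi_2$ preserving each axis $E_j$ with $\tilde{f}\circ\phi_2 = q_f$, for then $\phi := \phi_1 \circ \phi_2$ sends $E_j$ to $T_j$ and satisfies $q_f\circ\phi = \tilde{f}\circ\phi_2 = q_f$. I would obtain $\phi_2$ by Moser's path method applied to $\tilde{f}_s := (1-s)\tilde{f} + s q_f$: seek a time-dependent vector field $V_s = \sum V_s^i \partial_{x_i}$ satisfying $V_s\cdot\tilde{f}_s = \tilde{f} - q_f$ whose components all vanish on $\bigcup_j E_j$, and take $\phi_2$ to be the time-$1$ projection of the flow of $V_s + \partial_s$, exactly as in the key lemma.

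The main obstacle is to solve for such a $V_s$ under the constraint that it vanish on every coordinate axis, which is what will force the flow to preserve each $E_j$. The decisive observation is that $\tilde{f} - q_f$ vanishes on $\bigcup_j E_j$ and is of order $\geq 3$, so every monomial in its Taylor expansion has degree $\geq 3$ and involves at least two distinct variables; hence $\tilde{f} - q_f$ lies in $\mC{n}\cdot(x_i x_j)_{i<j}$, where $(x_i x_j)_{i<j}$ is precisely the ideal of $\bigcup_j E_j$. Since $\tilde{f}_s$ is Morse, the gradients $\partial_i \tilde{f}_s$ generate $\mC{n}$, and a Nakayama-type argument in the local ring $\OC{n}$ then yields a solution with $V_s^i \in (x_j x_k)_{j<k}$ to $\sum_i V_s^i \partial_i \tilde{f}_s = \tilde{f} - q_f$; such components automatically vanish on every $E_l$, so the flow of $V_s + \partial_s$ is well-defined on a neighborhood of $\{0\}\times[0,1]$, preserves each axis, and gives the desired $\phi_2$.
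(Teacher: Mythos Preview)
Your argument is correct, but it proceeds quite differently from the paper's proof. The paper straightens the tangency curves one at a time while preserving $f$ throughout: with $f=q_f$ and $T_n$ given by $x_i=x_n^2\alpha_i(x_n)$ ($i<n$), it writes down the explicit ansatz
\[
\phi(x)=\bigl(x_1-x_n^2\alpha_1(x_n),\ldots,x_{n-1}-x_n^2\alpha_{n-1}(x_n),(1+u)x_n\bigr)
\]
and solves $f\circ\phi=f$ for $u$ via the implicit function theorem; since this $\phi$ fixes the hyperplane $\{x_n=0\}$ pointwise, iterating over the indices does not disturb the curves already straightened. By contrast, you first move all the axes to the $T_j$ simultaneously by an arbitrary extension $\phi_1$ (temporarily spoiling $f$), and then correct $f$ back to $q_f$ by a Moser path with a vector field vanishing on every axis. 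The paper's route is more elementary and self-contained (only the implicit function theorem is used), whereas yours matches the path-method philosophy of the rest of the paper. Two small remarks on your write-up: the extension $\phi_1$ can be taken explicitly as $\phi_1(x)=\sum_j\gamma_j(x_j)$, and your ``Nakayama-type argument'' is really the direct observation that since the $\partial_i\tilde f_s$ generate $\mC{n}$, writing $\tilde f-q_f=\sum_{i<j}c_{ij}x_ix_j$ with $c_{ij}\in\mC{n}$ and then $c_{ij}=\sum_k d_{ijk}(s,x)\,\partial_k\tilde f_s$ gives $V^k_s=\sum_{i<j}d_{ijk}x_ix_j\in\langle x_ix_j\rangle_{i<j}$ with holomorphic dependence on $s$; this is the step worth spelling out.
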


\begin{proof}
If we suppose that $f$ is quadratic and $q_g$ diagonal, it is enough to find $\phi$ which conjugates $\mr{Tang}(f,g)$ with $\mr{Tang}(q_f,q_g)$ and preserves $f$: $f\circ \phi=f$.
Call $D_n$ the $x_n$-axis and $T_n$ the tangency curve tangent to $D_n$.
It is sufficient to find a diffeomorphism $\phi$ preserving $f$ and fixing the points of $\{x_n=0\}$ such that $\phi(D_n) = T_n$.
Indeed, applying such a $\phi$ transforms $T_n$ into $D_n$, but if $\tilde{\phi}$ is a similar diffeomorphism obtained by exchanging tho roles of $x_n$ and $x_{n-1}$, applying $\tilde{\phi}$ transforms (the new) $T_{n-1}$ into $D_{n-1}$ and stabilizes $D_n$.
We can repeat this for each $T_j$ to obtain a diffeomorphism preserving the fibers of $f$ and conjugating the tangency loci.

The curve $T_n$ is tangent to $D_n$ so that it has equations $x_i = x_n^2 \alpha_i(x_n)$ ($i = 1,\ldots, n-1$).
We can then search $\phi$ in the form
\[\phi(x_1,\ldots,x_n) = (x_1-x_n^2 \alpha_1(x_n),\ldots,x_{n-1}-x_n^2 \alpha_{n-1}(x_n),(1+u)x_n)\]
where $u$ is an unknown holomorphic function.
The condition that $\phi$ preserve $f$ can be written
\[\sum_{i\leq n}{x_i^2} - 2x_n^2\sum_{i<n}{x_i \alpha_i(x_n)} + x_n^4\sum_{i<n}{\alpha_i(x_n)^2} + 2x_n^2u + x_n^2 u^2 = \sum_{i\leq n}{x_i^2},\]
that is
\[2u+u^2 = 2\sum_{i<n}{x_i \alpha_i} - x_n^2\sum_{i<n}{\alpha_i^2}.\]
The implicit function theorem then gives a holomorphic solution $u\in\mC{n}$ which in turn gives the desired diffeomorphism $\phi$ (note that $\phi(x_1,\ldots,x_{n-1},0) = (x_1,\ldots,x_{n-1},0)$).
\end{proof}

\begin{prop}
\label{prop_radical}
If $(f,g)$ is an $\ms{R}$-generic pair of Morse functions then the tangency ideal $I(f,g)$ is radical.
\end{prop}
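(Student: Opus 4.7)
My plan is to reduce the statement to a concrete matrix invertibility check using the two preceding propositions. By Proposition \ref{prop_locus2}, I may assume that $f = \sum_i x_i^2$ is already quadratic and that $\mr{Tang}(f,g)$ is exactly the union $C = \bigcup_{k=1}^n D_k$ of the coordinate axes. The Nullstellensatz then identifies $\sqrt{I(f,g)}$ with the vanishing ideal of $C$, which is the standard squarefree monomial ideal $\mf{a} = \langle x_k x_l : k<l\rangle$. The inclusion $I(f,g) \subseteq \mf{a}$ is automatic, so I only have to prove the reverse inclusion, namely that each $x_k x_l$ lies in $I(f,g)$.

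Since every generator $h_{ij}$ vanishes on $C$, we have $h_{ij}\in \mf{a}$, so there exist $M_{ij,kl}\in \OC{n}$ with
\[ h_{ij} = \sum_{k<l} M_{ij,kl}(x)\, x_k x_l. \]
A direct computation of the $2$-jet, using $f=q_f$ and $q_g = \sum_k \lambda_k x_k^2$, gives $h_{ij} = 4(\lambda_j-\lambda_i)\,x_i x_j + O(|x|^3)$. Because the monomials $x_k x_l$ ($k<l$) are linearly independent in degree $2$, comparing degree-$2$ parts on both sides forces $M_{ij,kl}(0) = 4(\lambda_j-\lambda_i)\,\delta_{(i,j),(k,l)}$ whatever decomposition was chosen.

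Hence the $\binom{n}{2}\times \binom{n}{2}$ matrix $M(0)$ is diagonal with nonzero diagonal entries (by the genericity assumption $\lambda_i \neq \lambda_j$), so $\det M$ is a unit in $\OC{n}$ and $M^{-1}$ has holomorphic entries. Multiplying the matrix relation $(h_{ij}) = M\cdot (x_k x_l)$ on the left by $M^{-1}$ expresses each $x_k x_l$ as a holomorphic combination of the $h_{ij}$, giving $\mf{a}\subseteq I(f,g)$, hence equality, hence $I(f,g)$ is radical. I do not foresee a real obstacle: once Proposition \ref{prop_locus2} is used to normalize $f$ and $\mr{Tang}(f,g)$, the argument is a Nakayama-style invertibility check on the matrix relating the generators of $I(f,g)$ to those of $\mf{a}$, and the genericity of the $\lambda_i$'s is exactly what makes that matrix non-degenerate at the origin.
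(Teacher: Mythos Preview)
Your argument is correct and is essentially the same as the paper's: after normalizing so that $f=\sum x_i^2$ and $\mr{Tang}(f,g)$ is the union of the coordinate axes, both proofs express the generators $h_{ij}$ of $I(f,g)$ as a holomorphic matrix $M$ times the vector $(x_kx_l)_{k<l}$, observe that $M(0)=\mathrm{diag}(4(\lambda_j-\lambda_i))$ is invertible by genericity, and conclude $I(f,g)=\langle x_kx_l\rangle$. The only cosmetic difference is that you phrase the inclusion $I(f,g)\subset\langle x_kx_l\rangle$ via R\"uckert's Nullstellensatz, whereas the paper writes it down directly.
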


\begin{proof}
Suppose that $f=\sum{x_i^2}$, $q_g=\sum{\lambda_i x_i^2}$ and that $T:= \mr{Tang}(f,g)$ is the union of the axes.
Write $df\wedge dg = \sum_{i<j}{h_{ij}dx_i\wedge dx_j}$ with $h_{ij} = 4(\lambda_j - \lambda_i)x_ix_j + O({\mC{n}}^3)$.
The ideal of functions vanishing on $T$ is $\langle x_ix_j\rangle$ so $\langle h_{ij}\rangle \subset \langle x_ix_j\rangle$ and we need to show that $\langle h_{ij}\rangle = \langle x_ix_j\rangle$.

Introduce $N=\frac{n(n-1)}{2}$ and the vectors $H=(h_{ij})_{i<j}\in(\OC{n})^{N}$ and $X=(x_ix_j)_{i<j}\in(\OC{n})^{N}$.
Note that $h_{ij}-4(\lambda_j-\lambda_i)x_ix_j \in \mC{n}\langle x_ix_j\rangle$ so that there is a matrix $A$ with coefficients in $\OC{n}$ such that $H=AX$.
Note also that $A=\Lambda+B$ where $\Lambda=diag(4(\lambda_j-\lambda_i))$ is invertible and $B$ has coefficients in $\mC{n}$.
Hence, $A$ is invertible and the coefficients of the vectors $H$ and $X$ span the same ideal.
\end{proof}

With these propositions, we can use the key lemma to conclude the $\ms{R}$-classification of pairs of Morse functions:

\begin{thm}
\label{thm_functions}
Let $(f_0,g_0)$ and $(f_1,g_1)$ be two $\ms{R}$-generic pairs of Morse functions on $(\C{n},0)$.
Suppose that we can number the tangency curves $T_j^i$ ($j=1,\ldots,n$ and $i=0,1$) in such a manner that the pairs of Morse functions $(f_i\vert_{T_j^i},g_i\vert_{T_j^i})$ are conjugated under the action of $\mr{Diff}(T_j^0,T_j^1)$ on the right.
Then there is a diffeomorphism $\varphi$ such that $( f_0\circ \varphi,  g_0\circ \varphi) = (f_1,g_1)$.
\end{thm}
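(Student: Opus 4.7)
The plan is to reduce both pairs to a common normal form and then invoke the Key Lemma. I would first apply Morse's theorem to each pair separately so that $f_0 = f_1 = \sum x_i^2$. Since each pair is $\ms{R}$-generic, a further linear change in $O_n(\mb{C})$ (which preserves $\sum x_i^2$) diagonalizes each $q_{g_i}$. Proposition \ref{prop_locus2} then yields a diffeomorphism preserving $f_i$ and sending $\mr{Tang}(f_i,g_i)$ onto the union $T$ of the coordinate axes. After these independent reductions, both pairs live in the same ambient model: $f_0 = f_1 =: f = \sum x_i^2$ and $\mr{Tang}(f,g_0) = \mr{Tang}(f,g_1) = T$.

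Next I would exploit the hypothesis to align $g_0$ and $g_1$ axiswise. On the $j$-th axis $T_j$, $f|_{T_j}(x_j) = x_j^2$, so the conjugating diffeomorphism $\psi_j \in \mr{Diff}(T_j)$ provided by the hypothesis satisfies $\psi_j(x_j)^2 = x_j^2$ and $g_0|_{T_j} = g_1|_{T_j} \circ \psi_j$; hence $\psi_j(x_j) = \epsilon_j x_j$ with $\epsilon_j = \pm 1$. The ambient diffeomorphism $\Phi(x_1,\ldots,x_n) = (\epsilon_1 x_1, \ldots, \epsilon_n x_n)$ preserves $f$, stabilizes each $T_j$, and restricts on $T_j$ to $\psi_j$. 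Replacing $g_1$ by $g_1 \circ \Phi$, I may assume $g_0|_{T_j} = g_1|_{T_j}$ for every $j$, so that $g_1 - g_0$ vanishes on all of $T$.

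At this stage the Key Lemma becomes available. By Proposition \ref{prop_radical}, both $I(f,g_0)$ and $I(f,g_1)$ are radical; since their common zero set is $T$, both coincide with the ideal of functions vanishing on $T$. In particular $I(f,g_0) = I(f,g_1)$, and $g_1 - g_0$ lies in this common ideal by the previous paragraph. Lemma \ref{key_lemma} then produces a diffeomorphism fixing $f$ and sending $g_1$ onto $g_0$, which combined with the preparatory diffeomorphisms yields the desired $\ms{R}$-equivalence.

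The main obstacle in this strategy is really the preparation: bringing the two independently-normalized pairs into a single model (same $f$, same tangency locus) so that the Key Lemma, which is stated for two functions $g_0, g_1$ sharing a single $f$, applies. Once this alignment is achieved the axiswise sign matching is elementary and Proposition \ref{prop_radical} does the heavy lifting, forcing the two tangency ideals to coincide with the radical ideal of $T$.
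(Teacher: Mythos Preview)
Your proposal is correct and follows the same route as the paper's own proof: reduce to a common $f=\sum x_i^2$ with both tangency loci equal to the union of coordinate axes (Proposition~\ref{prop_locus2}), match the restrictions on each axis, and then invoke Proposition~\ref{prop_radical} together with the Key Lemma. You are in fact more explicit than the paper at one point: where the paper simply writes ``by hypothesis, $(f,g_0)=(f,g_1)$ in restriction to each tangency curve'', you spell out that the conjugating diffeomorphism on each axis is forced to be $x_j\mapsto\pm x_j$ and absorb the signs via the ambient isometry $\Phi=(\epsilon_1 x_1,\ldots,\epsilon_n x_n)$, which is a welcome clarification.
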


\begin{proof}
By Proposition \ref{prop_locus2} we can suppose that $f_0 = f_1 = q_f$ and that the tangency loci for both couples are the same.
Then by hypothesis, $(f,g_0) = (f,g_1)$ in restriction to each tangency curve.
Since the ideals $I(f,g_0)$ and $I(f,g_1)$ are radical by proposition \ref{prop_radical}, this means that $I(f,g_0)=I(f,g_1)$ and $g_1-g_0\in I(f,g_0)$.
The proof is then completed by the lemma \ref{key_lemma}.
\end{proof}

In particular, we obtain:

\begin{cor}
An $\ms{R}$-generic pair of Morse functions $(f,g)$ is $\ms{R}$-conjugated to its quadratic parts if and only if $f$ and $g$ are $\C{}$-proportional on each tangency curve.
\end{cor}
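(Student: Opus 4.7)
The plan is to apply Theorem \ref{thm_functions} directly, with the model pair $(q_f,q_g)$ playing the role of $(f_1,g_1)$. For that pair the tangency curves are the coordinate axes $D_j$, and after identifying $D_j$ with $(\mathbb{C},0)$ via the coordinate $x_j$ we have $q_f|_{D_j} = x_j^2$ and $q_g|_{D_j} = \lambda_j x_j^2$. The hypothesis of the theorem therefore translates into the existence, for each $j$, of a diffeomorphism $\psi_j\in\mr{Diff}(T_j,D_j)$ such that $f|_{T_j} = \psi_j^2$ and $g|_{T_j} = \lambda_j\psi_j^2$. The ``only if'' direction is then immediate: any such $\psi_j$ gives $g|_{T_j} = \lambda_j f|_{T_j}$, so $f$ and $g$ are indeed $\mathbb{C}$-proportional on each tangency curve.

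For the converse I would assume $g|_{T_j} = \mu_j f|_{T_j}$ for some $\mu_j\in\mathbb{C}$ and first pin down the constant: by Proposition \ref{prop_locus1} the curve $T_j$ is tangent to $D_j$, so in a parameter $t$ along $T_j$ asymptotic to $x_j$ one has $f|_{T_j} = t^2 + O(t^3)$ and $g|_{T_j} = \lambda_j t^2 + O(t^3)$, which forces $\mu_j = \lambda_j$. Since $f|_{T_j}$ then has a nondegenerate double zero at $0$, it admits a local holomorphic square root $\psi_j$, automatically a diffeomorphism from $(T_j,0)$ to $(\mathbb{C},0)\cong(D_j,0)$. By construction
\[ (f|_{T_j},\, g|_{T_j}) = (\psi_j^2,\, \lambda_j\psi_j^2) = (q_f|_{D_j},\, q_g|_{D_j})\circ\psi_j, \]
so Theorem \ref{thm_functions} applies and yields an $\ms{R}$-equivalence between $(f,g)$ and $(q_f,q_g)$.

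I do not foresee any real obstacle: the corollary is essentially a repackaging of Theorem \ref{thm_functions} in the language of proportionality, the only substantive points being the identification $\mu_j = \lambda_j$ from leading-order terms and the extraction of the holomorphic square root of $f|_{T_j}$, both of which are routine.
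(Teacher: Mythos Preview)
Your proposal is correct and follows the same route the paper intends: the corollary is stated without proof as an immediate consequence of Theorem~\ref{thm_functions}, and your argument is precisely the natural unpacking of that implication. The only point worth tightening is the phrasing of the ``only if'' direction: you should say explicitly that an $\ms{R}$-conjugacy carries tangency curves to tangency curves (this is the invariance noted in the introduction), so that the $\psi_j$ exist; once that is said, your deduction $g|_{T_j}=\lambda_j f|_{T_j}$ is indeed immediate.
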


\begin{rmq}
\label{rq_existence}
Given $n$ smooth curves $T_j$ whose tangents at $0$ span $\C{n}$ and $n$ couples $(u_j,v_j)$ of Morse functions on $T_j$, there exists a pair of Morse functions having $T_j$ as tangency curves and equal to $(u_j,v_j)$ on $T_j$.
Indeed, we can suppose that $T_j$ is the $x_j$-axis so that we can take $f(x_1,\ldots,x_n) = \sum{u_j(x_j)}$ and $g = \sum{v_j(x_j)}$.

Hence, since $f$ can be normalized, the moduli space for generic couples of Morse functions is given by the set of generic non-ordered $n$-uples $(v_1,\ldots,v_n)$ of germs of Morse functions on $\Cg{}$ modulo the relation $(v_1,\ldots,v_n)\sim (v_1\circ (\pm id),\ldots, v_n\circ (\pm id))$, the signs $\pm$ being independent.
\end{rmq}

Note also the corollary:

\begin{cor}
Let $(f_0,g_0)$ and $(f_1,g_1)$ be two $\ms{R}$-generic pairs of Morse functions on $(\C{n},0)$. 
If these pairs are topologically conjugated, they are analytically conjugated.
\end{cor}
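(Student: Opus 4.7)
The plan is to derive the hypotheses of Theorem \ref{thm_functions} from a topological $\ms{R}$-conjugacy, so that the theorem itself supplies the analytic conjugacy. Let $h:\Cg{n}\to\Cg{n}$ be a homeomorphism with $(f_0,g_0)=(f_1,g_1)\circ h$. First I would show that $h$ maps $\mr{Tang}(f_0,g_0)$ onto $\mr{Tang}(f_1,g_1)$ and permutes their irreducible components. The key point is that $\mr{Tang}(f,g)$ is the critical set of the map $(f,g):\Cg{n}\to\Cg{2}$, that is, the set of points at which the fibers of $(f,g)$ fail to be smooth; since $h$ sends each fiber of $(f_0,g_0)$ homeomorphically onto the corresponding fiber of $(f_1,g_1)$, it must take singular points of fibers to singular points of fibers. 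By Proposition \ref{prop_locus1} each tangency locus is a union of $n$ smooth analytic curves $T_j^i$, and after renumbering we may assume $h(T_j^0)=T_j^1$ for each $j$.

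Next I would promote $h|_{T_j^0}$ to an analytic diffeomorphism. On each smooth curve $T_j^i$ the restriction $f_i|_{T_j^i}$ is a Morse function (the quadratic part of $f_i$ is $\sum_k x_k^2$ and $T_j^i$ is tangent to the $x_j$-axis), so by the one-variable Morse lemma there exist analytic parameters $s$ on $T_j^0$ and $s'$ on $T_j^1$ in which $f_0|_{T_j^0}=s^2$ and $f_1|_{T_j^1}=(s')^2$. In these coordinates $h|_{T_j^0}$ satisfies $h(s)^2=s^2$; the quotient $h(s)/s$ is then a continuous function on a connected punctured disk taking values in $\{\pm 1\}$, hence constant, so $h(s)=\pm s$ is analytic. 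This diffeomorphism belongs to $\mr{Diff}(T_j^0,T_j^1)$ and conjugates $(f_0|_{T_j^0},g_0|_{T_j^0})$ to $(f_1|_{T_j^1},g_1|_{T_j^1})$, supplying exactly the data required by Theorem \ref{thm_functions}.

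Applying Theorem \ref{thm_functions} then yields the analytic $\ms{R}$-conjugacy between $(f_0,g_0)$ and $(f_1,g_1)$. The main obstacle is the topological invariance of the tangency locus in the first step: although the intuition is clear (critical values and singular fibers are preserved under a topological conjugation), making this rigorous requires some care, for instance by identifying the tangency locus with the non-manifold points of the fibers of $(f,g)$ or by an argument based on the local topological structure of the map. Once that step is secured, the reduction to the one-dimensional case and the explicit solution of $h(s)^2=s^2$ make the rest of the argument essentially automatic.
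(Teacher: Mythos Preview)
Your proposal is correct and follows the same strategy as the paper's proof: both argue first that the tangency locus is preserved by the topological conjugacy (the paper phrases this via the Milnor number of $g$ restricted to the leaves of $f$, you via the singular fibers of the map $(f,g)$), then solve $h(s)^2=s^2$ on each tangency curve to promote the restriction of $h$ to an analytic diffeomorphism, and finally invoke Theorem~\ref{thm_functions}. The paper's treatment of the first step is no more detailed than yours.
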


\begin{proof}
First, note that the tangency points between $f_0$ and $g_0$ are given by the points where the Milnor number of $g_0$ restricted to a leaf of $f_0$ is greater or equal to $1$.
This characterization of the tangency points shows that a topological conjugacy between both couples respects the tangency curves.

As a consequence the restrictions of the couples $(f_i,g_i)$ to each tangency curve are topologically conjugated, and for each tangency curve $C$ there exists an homeomorphism $\phi$ of $C$ such that $l_0\circ \phi = l_1$ for $l=f,g$ on $C$.
For coordinates $z,w$ of $C$ such that $f_0(z)=z^2$ and $f_1(w)=w^2$, this equation writes $\phi(z)^2 = w^2$ so that $\phi(z) = \pm w$.
This shows that $\phi$ is holomorphic and each couples $(f_i,g_i)\vert_{T^i_j}$ are conjugated under the action of $\mr{Diff}(T_j^0,T_j^1)$ on the right.

Theorem \ref{thm_functions} can then be applied.
\end{proof}

\begin{rmq}
There is also a link between formal and analytical conjugacy: Artin's approximation theorem shows that if two pairs of germs of Morse functions are formally conjugated, they are also analytically conjugated.
\end{rmq}

\section{Pairs of Morse foliations}
\label{sec_foliations}

As stated in the introduction, the classification of pairs of Morse foliations up to diffeomorphism is equivalent to the $\ms{F}$-classification of pairs of Morse functions.
We say that a pair of Morse foliations $(\F,\G)$ is $\ms{F}$-generic if it has a pair of first integrals $(f,g)$ which is $\ms{R}$-generic. 

The invariants $(f_i\vert_{T_j^i},g_i\vert_{T_j^i})$ modulo conjugacy on the right are now only defined modulo conjugacy on the right and on the left.
First, these new invariants can be re-written in terms of involutions: on $(\C{},0)$, the data of a Morse function modulo conjugacy on the left is equivalent to the data of an involution via $f\mapsto i_f$ where $i_f$ is the function which associates to $x$ the other solution of $f(i_f(x))=f(x)$.

\begin{center}
\begin{tikzpicture}
\draw (0,0) circle (1);
\draw (-1.5,0) -- (1.5,0);
\draw (0,0) node {$\bullet$};
\draw (0,0) node[below right] {$0$};
\draw (1,0) node {$\bullet$};
\draw (1,0) node[above right] {$x$};
\draw (-1,0) node {$\bullet$};
\draw (-1,0) node[above left] {$i_f(x)$};
\end{tikzpicture}
\end{center}

But some information is lost in the process of considering the invariants modulo conjugacy on the left : for every pair of curves $C_1, C_2$ transverse to $\F$ and $\G$ and passing through the origin we can consider the holonomy transports $\varphi^{\F}_{12},\varphi^{\G}_{12}$ from $C_1$ to $C_2$ following the leaves of $\F$ or $\G$ :

\begin{center}
\begin{tikzpicture}
\draw (0,0) circle (2);
\draw (0,0) ellipse (2 and 1);
\draw (-2.5,0) -- (2.5,0);
\draw (0,-2.5) -- (0,2.5);
\draw (2,0) node[above right] {$C_1$};
\draw (0,2) node [above right] {$C_2$};
\draw (-2,0) node {$\bullet$};
\draw (-2,0) node [below left] {$x$};
\draw (0,-2) node {$\bullet$};
\draw (0,-2) node[below right] {$\varphi^{\F}_{12}(x)$};
\draw (0,-1) node {$\bullet$};
\draw (0,-1) node[below right] {$\varphi^{\G}_{12}(x)$};
\end{tikzpicture}
\end{center}

More precisely, we will consider the holonomy transport $\varphi_{ij}^{\F}$ and $\varphi_{ij}^{\G}$ between the tangency curves $T_i$ and $T_j$.
We see on the picture that there are two possible ways to define $\varphi_{nj}^{\F}$ and $\varphi_{nj}^{\G}$, so we have to make a choice (which is equivalent to choosing a local determination of the square root).
Put then $\varphi_{njn} = (\varphi^{\G}_{nj})^{-1}\circ \varphi^{\F}_{nj}\in \mr{Diff}(T_n)$; this function allows us to recover the pair $(f\vert_{T_j},g\vert_{T_j})$ from $(f\vert_{T_n},g\vert_{T_n})$.
Indeed, take two parametrizations $\alpha_j(t)$ and $\alpha_n(t)$ of $T_j$ and $T_n$ such that $\alpha_j = \varphi_{nj}^{\F}\circ \alpha_n$.
We want to compute $g\circ \alpha_j$, but $g(\alpha_j(t)) = g((\varphi_{nj}^{\G})^{-1}(\alpha_j(t)))$ and $\alpha_j(t)=\varphi_{nj}^{\F}(\alpha_n(t))$ so $g(\alpha_j(t)) = g(\varphi_{njn}(\alpha_n(t)))$.

Note also that the invariant $\lambda_j/\lambda_n$ can be found by taking the linear part of $\varphi_{njn}$; hence the following definition:

\begin{df}
Define the invariant of $(\F,\G)$ to be $Inv(\F,\G)=((i_f^n,i_g^n),(\varphi_{njn})_{j<n})$.
Two invariants $Inv_0, Inv_1$ are equivalent if there exists a diffeomorphism $\psi \in\mr{Diff}(T_n^0,T_n^1)$ such that $\psi^{-1}\circ Inv_1\circ \psi = Inv_0$.
\end{df}

\begin{thm}
\label{thm_foliations}
Let $(\F_0,\G_0)$ and $(\F_1,\G_1)$ be two $\ms{F}$-generic pairs of Morse foliations on $(\C{n},0)$. 
Suppose that we can number their tangency curves $T_j^i$ ($j=1,\ldots,n$ and $i=0,1$) such that their invariants $Inv(f,g)$ are equivalent.
Then $(\F_0,\G_0)$ and $(\F_1,\G_1)$ are analytically conjugated.
\end{thm}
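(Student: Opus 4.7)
The plan is to reduce the statement to Theorem~\ref{thm_functions}: I will pick first integrals $f_i$ of $\F_i$ and $g_i$ of $\G_i$ and use the equivalence of invariants to adjust them so that the restrictions to matching tangency curves are right-conjugate via explicit diffeomorphisms. The resulting $\ms{R}$-conjugacy of pairs of functions is automatically an $\ms{F}$-conjugacy of the pairs of foliations.

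First, I pick any first integrals $f_i,g_i$; these are Morse functions on $\Cg{n}$ determined by $\F_i,\G_i$ only up to left composition with an element of $\Df{}$. Let $\psi\in\mr{Diff}(T_n^0,T_n^1)$ be the diffeomorphism realizing the equivalence of invariants. The identity $\psi^{-1}\circ i_{f_1}^n\circ\psi = i_{f_0}^n$ says that $f_0\vert_{T_n^0}$ and $f_1\vert_{T_n^1}\circ\psi$ determine the same involution on $T_n^0$, hence differ by left composition with some $L_f\in\Df{}$; similarly there is $L_g$ for $g$. Replacing $f_1$ by $L_f^{-1}\circ f_1$ and $g_1$ by $L_g^{-1}\circ g_1$, which remain first integrals of $\F_1,\G_1$, I may assume $f_0\vert_{T_n^0} = f_1\vert_{T_n^1}\circ\psi$ and $g_0\vert_{T_n^0} = g_1\vert_{T_n^1}\circ\psi$. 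This settles the tangency curve $T_n$.

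Next, for each $j<n$ I produce a diffeomorphism $\psi_j:T_j^0\to T_j^1$ matching the two couples on $T_j$. I fix a parametrization $\alpha_n^0$ of $T_n^0$, set $\alpha_n^1 = \psi\circ\alpha_n^0$, define $\alpha_j^i = \varphi_{nj}^{\F_i}\circ\alpha_n^i$ for $i=0,1$, and finally $\psi_j = \alpha_j^1\circ(\alpha_j^0)^{-1}$. The reconstruction formulas recalled before the statement give $f_i\circ\alpha_j^i = f_i\circ\alpha_n^i$ and $g_i\circ\alpha_j^i = g_i\circ\varphi_{njn}^i\circ\alpha_n^i$; combining these with $f_1\circ\psi = f_0$ and $g_1\circ\psi = g_0$ on $T_n^0$ together with the relation $\varphi_{njn}^1\circ\psi = \psi\circ\varphi_{njn}^0$ (read off from the equivalence of invariants), a direct computation yields $(f_0,g_0)\vert_{T_j^0} = (f_1,g_1)\vert_{T_j^1}\circ\psi_j$.

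With this data assembled, Theorem~\ref{thm_functions} applies to $(f_0,g_0)$ and $(f_1,g_1)$ and produces a diffeomorphism $\varphi$ of $\Cg{n}$ with $(f_0\circ\varphi, g_0\circ\varphi) = (f_1,g_1)$, which conjugates $(\F_0,\G_0)$ to $(\F_1,\G_1)$. The one point requiring careful bookkeeping is the choice of square-root determinations in defining $\varphi_{nj}^{\F_i}$ and $\varphi_{nj}^{\G_i}$: corresponding determinations must be used on both sides for the equivalence of invariants to translate into $\varphi_{njn}^1\circ\psi = \psi\circ\varphi_{njn}^0$. Swapping the determination on one side only replaces $\psi_j$ by $\psi_j\circ(-\mr{id})$, so the argument goes through regardless.
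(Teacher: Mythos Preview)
Your proof is correct and follows essentially the same approach as the paper: adjust the first integrals by left composition so that the equivalence of involutions becomes an equality of restrictions on $T_n$, then use the equality of the $\varphi_{njn}$ to propagate this to all $T_j$, and finally invoke Theorem~\ref{thm_functions}. The paper's version is terser---it first normalizes $f_0=f_1=\sum x_i^2$ and then only adjusts $g_1$ on the left---whereas you keep track of the diffeomorphisms $\psi_j:T_j^0\to T_j^1$ explicitly, but the substance is identical.
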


\begin{proof}
Let $(f_i,g_i)$ be first integrals for $(\F_i,\G_i)$; we can suppose that their invariants $((i_f^n,i_g^n),(\varphi_{njn})_{j<n})$ are exactly the same and that $f_0=f_1 = \sum{x_i^2}$.
We can also compose $g_1$ with a diffeomorphism on the left in such a manner that $g_0\vert_{T^0_n} = g_1\vert_{T_n^1}$ because the involutions $i_g^n$ are the same.
Then, as shown above, $g_0$ and $g_1$ are equal on each tangency curve because the $\varphi_{njn}$ are the same.

Hence Theorem \ref{thm_functions} can be applied and the pairs $(f_i,g_i)$ are indeed conjugated.
\end{proof}

Note that for each invariant $((i_1,i_2),(\varphi_{njn})_{j<n})$ there is a pair of Morse foliations having this invariant.
Indeed, we can suppose that $i_1 = -id$, $f = \sum{x_i^2}$ and that $T_j$ is the $x_j$-axis.
Choose $g$ a Morse function on $T_n$ invariant by $i_2$ and for $p_j=(0,\ldots,0,x_j,0,\ldots,0)\in T_j$ put $g(p_j) = g(\varphi_{njn}(p_n))$ for $p_n=(0,\ldots,0,x_n)$ with $x_n = x_j$.
We thus have for each curve $T_j$ a pair of Morse functions which can be extended to $(\C{n},0)$ as seen before (in the remark \ref{rq_existence}).

In order to better understand these invariants, one can find the classification of pairs of involutions in \cite{voronin} or \cite{cm}.
In particular, we see that the formal and the analytic classification of pairs of Morse foliations are not the same, because there are some pairs of involutions that are formally but not analytically conjugated.

\section{$\ms{A}$-classification of pairs of Morse functions}
\label{sec_A_classification}

We say that an application $\Phi:\Cg{n}\rightarrow\Cg{2}$ whose components $(f,g)$ are Morse functions is $\ms{A}$-generic if the pair $(f,g)$ is $\ms{R}$-generic. 

Note that the set of such applications $\Phi$ is not stable under target diffeomorphisms (for example, the diffeomorphism $(y_1,y_2) \mapsto (y_1,y_2- \lambda_1 y_1)$ transforms $(\sum{x_i^2},\sum{\lambda_i x_i^2})$ into $(\sum{x_i^2},\sum{\mu_i x_i^2})$ with $\mu_1 = 0$).
Nevertheless, a pair of functions obtained by a target diffeomorphism from an $\ms{R}$-generic pair of Morse functions still has the same tangency locus and is still classified by its values on the tangency locus.

Throughout this section, we will carry on considering pairs of Morse functions to avoid unnecessary notations, but the results extend to pairs $\ms{A}$-equivalent to an $\ms{R}$-generic pair of Morse functions.

\begin{df}
Let $\Gamma\subset \Cg{2}$ be an irreducible curve and $\sigma_1,\sigma_2:\Cg{}\rightarrow \Gamma$ two parametrizations of $\Gamma$.
We say that the parametrized curves $(\Gamma,\sigma_1)$ and $(\Gamma,\sigma_2)$ are $\sigma$-equivalent if there is a diffeomorphism $\phi\in\Df{}$ such that $\sigma_1\circ \phi=\sigma_2$.
An equivalence class $[(\Gamma,\sigma)]$ is called a $\sigma$-curve; we define its $\sigma$-multiplicity to be the integer $n$ such that $\sigma(t) = (at^n+\ldots,bt^n+\ldots)$ with $(a,b)\neq(0,0)$.
\end{df}

If the parametrization is clear from the context, we may omit to mention it.

\begin{rmq}
A $\sigma$-curve $[(\Gamma,\sigma)]$ is entirely determined by $\Gamma$ and its $\sigma$-multiplicity.

A $\sigma$-curve $[(\Gamma,\sigma)]$ is of $\sigma$-multiplicity $2$ in exactly two cases: either $\Gamma$ is diffeomorphic to a curve $y^2-x^{2k+1}$ ($k\geq 1$) and $\sigma$ is a bijection or $\Gamma$ is smooth and $\sigma$ is a double cover.
The last case happens for example when $\sigma(t)=(t^2,b(t^2))$.
\end{rmq}

We saw that pairs of Morse functions are classified modulo the action of diffeomorphisms at the source only by the restrictions of $\Phi=(f,g)$ on the tangency curves $T_i$ between $f$ and $g$, ie. on the critical set of $\Phi$.
Said another way, the classification is given by the functions $\Phi_{\vert T_i}$ with diffeomorphisms at the source acting as reparametrization, that is by the $\sigma$-curves $\Phi(T_i)\subset \Cg{2}$.

Each of these $\sigma$-curves has $\sigma$-multiplicity $2$ at the origin and has the line $(t^2, \lambda_i t^2)$ as tangent cone if $f_{\vert T_i}(t)=t^2+\ldots$ and $g_{\vert T_i}(t) = \lambda_i t^2 + \ldots$

Thus the result is the following:

\begin{thm}
Two $\ms{A}$-generic pairs of Morse functions $\Phi_1$ and $\Phi_2$ are $\ms{A}$-conjugated if and only if the set of $\sigma$-curves $\{\Phi_1(T_i^1)\}_{i\leq n}$ and $\{\Phi_2(T_i^2)\}_{i\leq n}$ are conjugated by a diffeomorphism of $\Cg{2}$.

Moreover, for each set of $n$ $\sigma$-curves $\{C_i\}$ in $\Cg{2}$ with $\sigma$-multiplicity $2$ and distinct tangent cones, there exists an application $\Phi:\Cg{n} \rightarrow \Cg{2}$ whose components are Morse functions for which $C_i = \Phi(T_i)$.
\end{thm}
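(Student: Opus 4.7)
The proof splits into the equivalence criterion and the existence of $\Phi$.

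For the equivalence, the ``only if'' direction is immediate: if $\Phi_2 = \psi\circ\Phi_1\circ\varphi^{-1}$ with $(\varphi,\psi)\in\Df{n}\times\Df{2}$, then $\varphi$ sends tangency curves to tangency curves, and $\psi$ induces a $\sigma$-equivalence between each $\Phi_1(T_i^1)$ and $\Phi_2(T_i^2)$. For the converse, the plan is to replace $\Phi_1$ by $\psi\circ\Phi_1$, where $\psi\in\Df{2}$ is the given conjugator of the two sets of $\sigma$-curves. This new pair has the same tangency locus as $\Phi_1$ (target diffeomorphisms do not alter where $d\Phi$ drops rank), and on each pair of corresponding tangency curves $T_i^1, T_i^2$ the restrictions of $\psi\circ\Phi_1$ and $\Phi_2$ are conjugated by a reparametrization, which is exactly the ``restriction to tangency curves'' hypothesis of Theorem~\ref{thm_functions} applied in the form (announced at the beginning of the section) extended to pairs $\ms{A}$-equivalent to an $\ms{R}$-generic pair of Morse functions. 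A source diffeomorphism then conjugates $\psi\circ\Phi_1$ to $\Phi_2$, producing the desired $\ms{A}$-equivalence between $\Phi_1$ and $\Phi_2$.

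For the existence, the plan is to apply Remark~\ref{rq_existence} directly. I pick parametrizations $\sigma_i(t) = (u_i(t), v_i(t))$ of the $C_i$: the distinctness of the tangent cones, together with the fact that a Morse function on $\C{n}$ has nondegenerate restriction to every smooth curve through the origin (so no tangent cone of any $C_i$ can be aligned with a coordinate axis of $\Cg{2}$), forces each $u_i$ and $v_i$ to be a Morse function on $\Cg{}$. Taking $T_i$ to be the $x_i$-axis in $\C{n}$, the spanning hypothesis of Remark~\ref{rq_existence} is satisfied, and the pair $(f,g)$ defined by $f(x) = \sum_i u_i(x_i)$ and $g(x) = \sum_i v_i(x_i)$ is then a pair of Morse functions with tangency curves $T_i$ and restriction $(u_i,v_i)$ on each $T_i$; consequently $\Phi(T_i) = C_i$ as $\sigma$-curves.

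The subtle point to watch is, in the converse half of the equivalence, that $\psi\circ\Phi_1$ is generally no longer itself a pair of Morse functions, so Theorem~\ref{thm_functions} must be invoked in the broader form valid for pairs merely $\ms{A}$-equivalent to $\ms{R}$-generic Morse pairs. This is precisely the extension announced before the definition of $\sigma$-curves, and once it is absorbed, the rest of the argument is a clean assembly of Remark~\ref{rq_existence} with the $\sigma$-curve formalism.
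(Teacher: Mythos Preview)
Your treatment of the equivalence is correct and essentially the paper's argument: compose $\Phi_1$ with the given target diffeomorphism $\psi$, read off reparametrizations of the tangency curves from the resulting equalities of $\sigma$-curves, and invoke Theorem~\ref{thm_functions} in the extended form valid for pairs $\ms{A}$-equivalent to an $\ms{R}$-generic pair. You are also right to flag that $\psi\circ\Phi_1$ need not itself have Morse components.

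The realization argument, however, has a genuine gap. The claimed ``fact'' that a Morse function on $\Cg{n}$ has nondegenerate restriction to every smooth curve through the origin is false over $\C{}$: for instance $f=x_1^2+x_2^2$ vanishes identically on the curve $t\mapsto (t,it)$. More importantly, the reasoning is circular: you are invoking a property of the not-yet-constructed map $\Phi$ to rule out tangent cones of the \emph{given} $C_i$ lying along the coordinate axes of $\Cg{2}$. Nothing in the hypotheses ``$\sigma$-multiplicity $2$ with distinct tangent cones'' prevents some $C_i$ from being tangent to an axis, and in that case one of your $u_i$ or $v_i$ vanishes to order at least $3$, so $\sum_i u_i(x_i)$ or $\sum_i v_i(x_i)$ fails to be Morse. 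The paper handles this by first applying a linear change of coordinates on $\Cg{2}$ so that no tangent cone lies along an axis; one can then parametrize each $C_i$ as $\sigma_i(t)=(t^2,\lambda_i t^2+O(t^3))$ with $\lambda_i\neq 0$, after which your construction via Remark~\ref{rq_existence} goes through verbatim, the distinctness of tangent cones giving $\lambda_i\neq\lambda_j$ and hence $\ms{A}$-genericity of the resulting pair.
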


\begin{rmq}
A diffeomorphism $\psi$ of $\Cg{2}$ conjugates two families of $\sigma$-curves $([C_i^1,\sigma_i^1])$ and $([C_i^2,\sigma_i^2])$ if and only if for each $i$, the $\sigma$-curves $C_i^1$ and $C_i^2$ have the same multiplicity and $\psi$ conjugates the families of curves $(C_i^1)$ and $(C_i^2)$.
\end{rmq}

\begin{proof}
Clearly, if two pairs are conjugated by source and target diffeomorphisms, their critical sets are conjugated at the source, so the images of the critical sets are conjugated at the target.

Conversely, suppose that for two generic pairs $\Phi_j=(f_j,g_j)$ there exists a diffeomorphism $\psi\in \Df{2}$ conjugating the sets of $\sigma$-curves $\{\Phi_j(T_i^j)\}_{i\leq n}$.
Then we can suppose these sets to be equal, which means that for the right numbering of the tangency curves, the $\sigma$-curves $\Phi_1(T_i^1)$ and $\Phi_2(T_i^2)$ are equal for each $i$.
This gives for every $i$ a diffeomorphism $\varphi_i: T_i^1 \rightarrow T_i^2$ such that $\Phi_{1\vert T_i^1} = \Phi_{2\vert T_i^2}\circ \varphi_i$.

We can then conclude with theorem \ref{thm_functions}.
\\

For the realization part of the theorem, take $n$ $\sigma$-curves $C_i$ in $\Cg{2}$ with $\sigma$-multiplicity $2$ and distinct tangent cones.
Note first that we can suppose that no curve has an axe as tangent cone so that these $\sigma$-curves can be parametrized by $\sigma_i(t) = (t^2, \lambda_i t^2 + O(t^3)) =: (u_i(t),v_i(t))$ with $\lambda_i\neq 0$.
But these curves are the images of the critical locus of the pair $(\sum{u_i(x_i)},\sum{v_i(x_i)})$ which is $\ms{A}$-generic because $\lambda_i \neq \lambda_j$ if $i\neq j$ and this concludes the proof.
\end{proof}

\section{Quotients of Morse functions}
\label{sec_quotients}

Next, consider meromorphic functions $h=g/f$ with $f,g\in \OC{n}$ Morse functions satisfying the genericity condition.
We want to classify these functions up to diffeomorphism at the source.

First, consider the critical locus of $h$ : it is given by the zeroes of $\omega = gdf-fdg$, which contain the indeterminacy locus $\{f=0\}\cap\{g=0\}$.
Note that when $f=\sum{x_i^2}$ and $g=\sum{\lambda_i x_i^2}$, the critical locus contains not only $\{f=0\}\cap\{g=0\}$ but also the union of the axes.
We begin by showing that after a generic perturbation, only the indeterminacy locus remains.
Denote by $I(\omega)$ the ideal spanned by the components of $\omega$.

We say that a pair of Morse functions is $\ms{Q}$-generic if it is diffeomorphic to $(\sum{x_i^2}, \sum{\lambda_i x_i^2+\alpha_ix_i^3+O(\mf{m}^4)})$ with $\lambda_i\neq \lambda_j$ and $\alpha_i\neq 0$; we say that a quotient $g/f$ is $\ms{Q}$-generic if the pair $(f,g)$ is $\ms{Q}$-generic.

\begin{lem}
For a $\ms{Q}$-generic pair of Morse functions $(f,g)$, the ideal $I(\omega)$ contains $\langle f,g\rangle \cdot \mC{n}^4$.
\end{lem}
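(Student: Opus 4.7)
My plan is to use the $\ms{Q}$-generic normal form $f=\sum x_i^2$, $g=\sum(\lambda_ix_i^2+\alpha_ix_i^3)+O(\mC{n}^4)$ (with the $\lambda_i$ distinct and every $\alpha_i\neq 0$), and to proceed in three stages: an algebraic step placing $\langle f,g\rangle\cdot I(f,g)$ inside $I(\omega)$, a Nakayama reduction to pure-power generators $fx_i^4,gx_i^4$, and a restriction-to-axis argument handling those pure powers. The algebraic step relies on the identities
\[ f\,h_{ij} = (\partial_j f)\omega_i-(\partial_i f)\omega_j,\qquad g\,h_{ij} = (\partial_j g)\omega_i-(\partial_i g)\omega_j,\]
which follow directly from $\omega_k=g\partial_kf-f\partial_kg$. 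These show $fh_{ij},gh_{ij}\in I(\omega)$; combined with Proposition \ref{prop_radical}, which identifies $I(f,g)$ with $\langle x_ix_j:i\neq j\rangle$ in adapted coordinates, this gives $\langle f,g\rangle\cdot I(f,g)\subset I(\omega)$. Consequently $fx^\alpha$ and $gx^\alpha$ already lie in $I(\omega)$ whenever $x^\alpha$ is supported on at least two variables.

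For the Nakayama step, consider the finitely generated $\OC{n}$-module $M:=(\langle f,g\rangle\mC{n}^4+I(\omega))/I(\omega)$: Nakayama's lemma gives $M=0$ as soon as $M=\mC{n}\cdot M$, which is equivalent to $\langle f,g\rangle\mC{n}^4\subset\langle f,g\rangle\mC{n}^5+I(\omega)$. After the first stage, the only remaining generators to be treated are pure-power ones, so it suffices to prove
\[ fx_i^4,\;gx_i^4 \in I(\omega)+\langle f,g\rangle\mC{n}^5 \quad\text{for each }i.\]

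The key input is the restriction of $\omega_i=2gx_i-f\,\partial_ig$ to the coordinate axis $D_i=\{x_j=0:j\neq i\}$. Since $f|_{D_i}=x_i^2$ and $g|_{D_i}=\lambda_ix_i^2+\alpha_ix_i^3+O(x_i^4)$, a direct Euler-type calculation yields $\omega_i|_{D_i}=-\alpha_ix_i^4+O(x_i^5)=x_i^4\,u_i(x_i)$, where $u_i\in\mb{C}\{x_i\}$ is a unit precisely because $\alpha_i\neq 0$. Setting $\rho_i:=\omega_i-\omega_i|_{D_i}\in I(D_i)$, we get $x_i^4=u_i^{-1}\omega_i-u_i^{-1}\rho_i$, so
\[ fx_i^4=fu_i^{-1}\omega_i-fu_i^{-1}\rho_i,\]
whose first term lies in $I(\omega)$. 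I will then split the monomials of $\rho_i$ according to whether they involve $x_i$: a monomial of $\rho_i$ containing $x_i$ must also contain some $x_j$ with $j\neq i$ (since $\rho_i\in I(D_i)$), hence carries $x_ix_j$ as a factor and is sent by $f$ into $\langle f,g\rangle\cdot I(f,g)\subset I(\omega)$; the $x_i$-free part of $\rho_i$ is exactly $\omega_i|_{x_i=0}=-f|_{x_i=0}\cdot(\partial_ig)|_{x_i=0}$, and because the quadratic and cubic parts of $g$ are diagonal in the normal form, $(\partial_ig)|_{x_i=0}$ has order at least $3$ (the lowest contribution comes from $x_ix_l^3$-type terms in $g_4$), so $\omega_i|_{x_i=0}\in\mC{n}^5$ and $f$ times it lands in $f\mC{n}^5\subset\langle f,g\rangle\mC{n}^5$. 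Putting these together gives $fx_i^4\in I(\omega)+\langle f,g\rangle\mC{n}^5$, and the identical argument with $g$ in place of $f$ gives $gx_i^4\in I(\omega)+\langle f,g\rangle\mC{n}^5$. The step I expect to require the most care is this last order estimate on the $x_i$-free part of $\omega_i$: it is precisely where the diagonality of $g_2$ and $g_3$ secured by the $\ms{Q}$-genericity normalization, together with $\alpha_i\neq 0$, is genuinely used; any attempt to solve for $fx_i^4$ purely from the relation obtained by expanding $x_i^2\omega_i$ runs into circular constraints, and the restriction argument is what breaks the circle.
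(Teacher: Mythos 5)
Your proposal is correct in substance and follows the same three-beat strategy as the paper's proof: mixed degree-$4$ monomials times $f,g$ are captured through $\langle f,g\rangle\cdot I(f,g)\subset I(\omega)$, the pure powers are reached using the cubic coefficients $\alpha_i\neq 0$ via an Euler-type relation, and a completion-style argument upgrades membership modulo higher-order terms to actual membership. Where you differ, you mostly improve the execution: your identity $f\,h_{ij}=(\partial_jf)\omega_i-(\partial_if)\omega_j$ (and its $g$-analogue) is \emph{exact}, whereas the paper only computes $x_j\omega_i-x_i\omega_j=2x_ix_j(\lambda_j-\lambda_i)f+O(\mC{n}^5)$ after reducing to a normal form; your per-axis restriction $\omega_i\vert_{D_i}=-\alpha_ix_i^4+O(x_i^5)$ is a localized version of the paper's global Euler computation $\tfrac12\sum_i x_i\omega_i=f\bigl(-\tfrac12\sum_i\alpha_ix_i^3+O(\mC{n}^4)\bigr)$; and your Nakayama step is a cleaner packaging of the paper's induction on the degree followed by ``formal membership plus flatness.''

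There is, however, one loose joint: you use simultaneously (a) $I(f,g)=\langle x_ix_j\rangle_{i\neq j}$ and (b) diagonality of the $2$- and $3$-jets of $g$, without checking that both hold in the \emph{same} coordinates. In the $\ms{Q}$-generic normal form the tangency curves are only tangent to the axes, so (a) fails in general; and straightening them by Proposition \ref{prop_locus2} (which preserves $f$) modifies the $3$-jet of $g$ by off-diagonal cubic terms, threatening (b) exactly where you need it: a surviving monomial $c\,x_ix_j^2$ in $j^3g$ would make $(\partial_ig)\vert_{x_i=0}$ of order $2$, so $\omega_i\vert_{x_i=0}\notin\mC{n}^5$, and your relation for $fx_i^4$ would be contaminated by $fx_j^4$, leaving a linear system whose invertibility you have not addressed. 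The situation can be rescued within your framework: once $\mr{Tang}(f,g)$ is the union of the axes, the vanishing of $h_{ij}$ on $D_j$ forces $g_3$ to contain no monomial $x_ix_j^2$, leaving at worst triple products $x_ix_jx_k$ with distinct indices, whose contribution to $(\partial_ig)\vert_{x_i=0}$ lies in $\langle x_jx_k\rangle_{j\neq k}\subset I(f,g)$ and is absorbed by your first stage. But the shortest repair is the paper's own opening move, which you bypassed: apply Theorem \ref{thm_functions} to reach the split form $g=\sum u_i(x_i)$, in which (a) and (b) hold at once ($h_{ij}=x_ix_j\cdot\text{unit}$), and in fact $(\partial_ig)\vert_{x_i=0}=u_i'(0)=0$, so $\omega_i\vert_{x_i=0}$ vanishes identically and your stage three collapses to the observation that every monomial of $\rho_i$ is divisible by some $x_ix_j$. (In either repair one should also note that $\alpha_i\neq 0$ persists under the reduction, since parametrizing $T_i$ so that $f=t^2$ pins the cubic coefficient of $g\vert_{T_i}$ up to sign --- a point the paper itself glosses over.)
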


\begin{proof}
For simplicity, denote $\mf{m}=\mC{n}$.
By theorem \ref{thm_functions} we can suppose that $f=\sum{x_i^2}$ and $g=\sum{u_i(x_i)}$.
The genericity hypothesis thus means that $u_i(x_i) = \lambda_i x_i^2 + \alpha_i x_i^3+O(x_i^4)$ with $\alpha_i \neq 0$.
If we write $\omega = \sum{ \omega_i dx_i}$, the coefficient $\omega_i$ is
\[
\omega_i = 2\sum_{j\neq i}{(\lambda_j-\lambda_i)x_ix_j^2} + O(\mf{m}^4)
\]
so that $\omega_i = 2x_i (g - \lambda_i f) + O(\mf{m}^4)$.
Hence the equalities $x_j \omega_i- x_i \omega_j = 2x_ix_j(\lambda_j-\lambda_i)f+O(\mf{m}^5)$ and $\lambda_jx_j \omega_i - \lambda_ix_i \omega_j = 2x_ix_j(\lambda_j-\lambda_i)g+O(\mf{m}^5)$.
As a consequence, for each monomial $m$ of degree $4$ except $m=x_k^4$ and each $l=f,g$, we have $m l \in I(\omega)+\mf{m}^7$.
Furthermore,
\begin{align*}
\frac{1}{2}\sum_i{x_i \omega_i} &= \sum_i{ \frac{1}{2}x_i \left(g \partial_{x_i}f - f \partial_{x_i}g \right)}\\
&= g\sum_i{\frac{1}{2}x_i \partial_{x_i}f} - \frac{1}{2}f\sum_i{x_i \partial_{x_i}g}\\
&= gf - \frac{1}{2}f\sum_i{x_i \partial_{x_i}g}\\
&= f \left(g - \sum_i{\frac{1}{2}x_i \partial_{x_i}g} \right)\\
&= f \left(\frac{-1}{2}\sum_i{\alpha_i x_i^3} + O(\mf{m}^4) \right).\\
\end{align*}
Thus, $x_i\sum{x_j \omega_j} = \beta_i x_i^4 f + \sum_{j\neq i}{\beta_j x_jx_i^3 f} + O(\mf{m}^7)$ for some non-zero coefficients $\beta_k$, and $x_i^4f \in I(\omega) + \mf{m}^7$.

A similar computation shows that $x_i^4g\in I(\omega) + \mf{m}^7$; so for each monomial $m$ of degree $4$ and each $l=f,g$, we have $ml\in I(\omega)+\mf{m}^7$.
In fact, $ml$ belongs to the ideal $I(\omega) + \langle f,g \rangle\cdot \mf{m}^5$ because $I(\omega)$ is obviously a subset of $\langle f,g \rangle$.
It immediately follows that for each index $k\geq 4$, each monomial $m$ of degree $k$ and each $l=f,g$, $ml\in I(\omega)+\langle f,g\rangle \cdot \mf{m}^{k+1}$.
This means that $ml$ formally belongs to the ideal $I(\omega)$ hence by flatness, $\langle f,g\rangle\cdot \mf{m}^4\subset I(\omega)$.
\end{proof}

\begin{rmq}
Note that the proof is still valid for $1$-parameter families $(f_t),(g_t)$ with fixed $3$-jets.
Indeed, we can show in the exact same way that $ml\in I(\omega) + \mf{m}^7$ for each monomial $m$ in $x$ of degree $4$ and $l=f,g$, the only difference is that $f, g$ and $\omega$ depend on $t$ (here $\mf{m}$ is still $\langle x_1,\ldots,x_n\rangle$).

Note also that for $1$-parameter families $(f_t),(g_t)$ with fixed $3$-jets, being a $\ms{Q}$-generic pair of Morse functions for each $t\in\C{}$ is equivalent to being a $\ms{Q}$-generic pair of Morse functions for $t=0$ because the genericity only depends on the $3$-jets.

We thus obtain the following:
\end{rmq}

\begin{lem}
\label{lemma_meromorphic}
Consider two functions $f,g\in \mc{O}(t,x_1,\ldots,x_n)$ defined in a neighborhood of $\C{}_t\times\{0\}\subset\C{}_t\times\C{n}_x$ with $3$-jets independent of $t$.
Suppose that $(f(t,\cdot),g(t,\cdot))$ is a $\ms{Q}$-generic pair of Morse functions for each $t$.
Consider $\omega_x= gd_xf-fd_xg$ and $\mf{m}=\langle x_1,\ldots,x_n\rangle$, then $\langle f,g\rangle \mf{m}^4\subset I(\omega_x)$.
\end{lem}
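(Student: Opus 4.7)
The plan is to transcribe the proof of the previous lemma into the parametric setting, exploiting the fact that the 3-jets are $t$-independent so that normalization can be performed once and for all. First I would normalize at $t=0$: Theorem \ref{thm_functions} applied to $(f(0,\cdot),g(0,\cdot))$ yields a \emph{$t$-independent} diffeomorphism of $\Cg{n}$ (acting on the $x$-variables only) that puts this pair in the form $(\sum x_i^2,\sum u_i(x_i))$ with $u_i(x_i)=\lambda_i x_i^2+\alpha_i x_i^3+O(x_i^4)$, $\lambda_i\neq\lambda_j$ and $\alpha_i\neq 0$. Because a diffeomorphism in $x$ preserves the property that two functions agree up to order $4$, and because by hypothesis the $3$-jets of $f$ and $g$ are constant in $t$, this change of variables forces
\[ f(t,x)=\textstyle\sum x_i^2+O(\mf{m}^4),\qquad g(t,x)=\textstyle\sum\lambda_i x_i^2+\sum\alpha_i x_i^3+O(\mf{m}^4) \]
for every $t$, where $\mf{m}=\langle x_1,\ldots,x_n\rangle$ and the error terms depend holomorphically on $t$.

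Next, the algebraic manipulations of the previous lemma apply verbatim, since the only constants that enter are the $3$-jet coefficients $\lambda_i,\alpha_i$, which are $t$-independent. Writing $\omega_x=\sum_i\omega_i\,dx_i$, a direct calculation gives $\omega_i=2x_i(g-\lambda_i f)+O(\mf{m}^4)$. The combinations $x_j\omega_i-x_i\omega_j$ and $\lambda_j x_j\omega_i-\lambda_i x_i\omega_j$ then yield $x_ix_j f,\,x_ix_jg\in I(\omega_x)+\mf{m}^5$ for $i\neq j$ (using $\lambda_i\neq\lambda_j$), while the Euler-type identity $\sum_i x_i\omega_i=-f\sum_i\alpha_i x_i^3+O(\mf{m}^6)$ together with its analogue for $g$ yields $x_k^4 f,\,x_k^4 g\in I(\omega_x)+\mf{m}^7$ (using $\alpha_k\neq 0$). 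Combining, $ml\in I(\omega_x)+\mf{m}^7$ for every monomial $m$ of degree $4$ in $x$ and every $l\in\{f,g\}$.

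Finally, because $ml\in\langle f,g\rangle$ trivially and $I(\omega_x)\subset\langle f,g\rangle$, the membership $ml\in I(\omega_x)+\mf{m}^7$ refines to $ml\in I(\omega_x)+\langle f,g\rangle\cdot\mf{m}^5$; iterating gives $ml\in I(\omega_x)+\langle f,g\rangle\cdot\mf{m}^k$ for every $k$. Krull's intersection theorem in the Noetherian local ring at the origin of $\Cg{}_t\times\Cg{n}$ (equivalently, flatness of the formal completion, as in the previous lemma) then produces $ml\in I(\omega_x)$, so $\langle f,g\rangle\cdot\mf{m}^4\subset I(\omega_x)$.

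The main, and essentially only, point to verify carefully is that the normalization step survives in families. This is what the fixed-$3$-jet hypothesis is designed for: a single $t$-independent diffeomorphism simultaneously normalizes the $3$-jet of every member of the family, and the rest of the argument is an ideal-theoretic computation in $\mc{O}(U)$ involving only the (constant) $3$-jet data.
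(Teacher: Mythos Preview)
Your proof is correct and follows the paper's approach, which (via the remark preceding the lemma) simply notes that the algebra of the preceding non-parametric lemma carries over unchanged to $1$-parameter families with fixed $3$-jets; you have made the normalization step more explicit than the paper does, correctly observing that a single $t$-independent source diffeomorphism suffices to normalize the $3$-jet of every slice. One small imprecision: the final Krull/flatness step should be invoked not only in the local ring at the origin but at every point $(t_0,0)$ of the $t$-axis (the same computation applies verbatim since only the $t$-independent $3$-jet data enters), after which a coherence/Stein argument as in Proposition~\ref{prop_coherence} gives the inclusion in $\mc{O}(U)$; this is routine.
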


\begin{thm}
\label{thm_meromorphic}
Let $h_0$ and $h_1$ be $\ms{Q}$-generic quotients of Morse functions with $h_i=g_i/f_i$.
Suppose that we have equalities between the $3$-jets: $j^3f_0=j^3f_1$ and $j^3g_0=j^3g_1$.
Then there exists a diffeomorphism $\varphi\in \Df{n}$ such that $h_0\circ \varphi = h_1$. 
\end{thm}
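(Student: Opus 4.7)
The plan is to apply Moser's path method to the straight-line homotopy $f_t=(1-t)f_0+tf_1$, $g_t=(1-t)g_0+tg_1$. Since $j^3f_0=j^3f_1$ and $j^3g_0=j^3g_1$, the $3$-jets of $f_t,g_t$ are constant in $t$; by the observation preceding Lemma~\ref{lemma_meromorphic} each pair $(f_t,g_t)$ is $\ms{Q}$-generic, and $\dot f:=\partial_t f_t=f_1-f_0$, $\dot g:=\partial_t g_t=g_1-g_0$ both belong to $\mf{m}^4$. The goal will be to construct a vector field $X=\partial_t+\sum_i X_i(t,x)\,\partial_{x_i}$ on a neighborhood $U$ of $\{0\}\times[0,1]$ satisfying $X\cdot(g_t/f_t)=0$ and $X_i(t,0)=0$; its time-$1$ flow from $t=0$, suitably inverted, will produce the sought-after diffeomorphism $\varphi$.

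A direct computation shows that $X\cdot(g_t/f_t)=0$ is equivalent, after multiplication by $f_t^2$, to the holomorphic equation
\[
f\dot g-g\dot f \;=\; \sum_{i=1}^{n}X_i\,\omega_i,\qquad \omega_i := g\,\partial_{x_i}f - f\,\partial_{x_i}g.
\]
Because $\dot f,\dot g\in\mf{m}^4$, the left-hand side lies in $\langle f,g\rangle\cdot\mf{m}^4$, which by Lemma~\ref{lemma_meromorphic} is contained in the ideal $I(\omega_x)=\langle\omega_1,\ldots,\omega_n\rangle$. Stalkwise solutions therefore exist along $\{0\}\times[0,1]$, and a coherence argument in the spirit of Proposition~\ref{prop_coherence} (Oka's theorem plus Cartan~B on a Stein neighborhood $U$) patches them into global $X_i\in\mc{O}(U)$.

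The main obstacle I anticipate is ensuring that $X_i(t,0)=0$, so that the spatial part of $X$ vanishes along the $t$-axis and the flow is defined near $\{0\}\times[0,1]$. This is forced by a degree comparison: expanding at the origin, $\omega_i\in\mf{m}^3$ with leading cubic part
\[
(\omega_i)_3 \;=\; 2\,x_i\sum_{j\neq i}(\lambda_j-\lambda_i)\,x_j^2,
\]
and the genericity $\lambda_i\neq\lambda_j$ makes $(\omega_1)_3,\ldots,(\omega_n)_3$ linearly independent over $\C{}$ (the monomial $x_ix_j^2$ for $j\neq i$ appears in $(\omega_i)_3$ alone). Meanwhile $f\dot g-g\dot f\in\mf{m}^2\cdot\mf{m}^4=\mf{m}^6$, so its degree-$3$ part in $x$ vanishes. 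Matching cubic parts in $\sum X_i\omega_i=f\dot g-g\dot f$ yields $\sum_iX_i(t,0)(\omega_i)_3=0$, whence $X_i(t,0)=0$ for every $t$.

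Once such $X$ is in hand, its flow $\varphi_s$ is well defined on a neighborhood of $\{0\}\times[0,1]$ (since $X=\partial_t$ on the $t$-axis), and the relation $X\cdot(g_t/f_t)=0$ integrates to $h_1\circ\pi_x\varphi_1(\cdot,0)=h_0$; the inverse diffeomorphism is the $\varphi\in\Df{n}$ satisfying $h_0\circ\varphi=h_1$.
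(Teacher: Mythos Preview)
Your argument is correct and follows the same Moser-path strategy built on Lemma~\ref{lemma_meromorphic} that the paper uses. The one organizational difference worth flagging is that the paper first invokes Theorem~\ref{thm_functions} to put \emph{each} pair separately into normal form $f_k=\sum_i x_i^2$, $g_k=\sum_i u_i^k(x_i)$ with $u_i^k(x)=\lambda_i x^2+\alpha_i x^3+\varepsilon_i^k$; this makes $f_t\equiv f_0$ constant along the path (so the source term is simply $r=-f\partial_t g$) and, more importantly, keeps the whole family in the specific shape on which the proof of Lemma~\ref{lemma_meromorphic} (via the Remark preceding it) is actually carried out, so no separate coherence patching is required. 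Your route bypasses Theorem~\ref{thm_functions} entirely, at the cost of letting both $f$ and $g$ vary and appealing to Oka/Cartan~B to globalize the stalkwise solutions; this is a legitimate trade. Your linear-independence argument for $X_i(t,0)=0$ is precisely the content of the paper's parenthetical remark that ``there is no linear relation with constant coefficients between the leading terms of the components of $\omega_x$''.
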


\begin{proof}
By theorem \ref{thm_functions}, we can suppose that $g_k=\sum_i{u^k_i(x_i)}$ and $f_k=\sum_i{x_i^2}$ with $u^k_i(x) = \lambda_i x^2 + \alpha_i x^3 + \varepsilon^k_i$ with $\alpha_i\neq 0$ and $\varepsilon^k_i\in \mC{n}^4$.
Set for $t$ in a neighborhood of $[0,1]$ in $\mb{C}$ $f(t,\cdot) = f_t = f_0= f_1$, $g(t,\cdot) = g_t = g_0 + t(g_1-g_0)$, $h(t,\cdot) = h_t=g_t/f_t$ and $\omega = g df - f dg = \omega_x + r dt$.

Note that $r = -f \partial_{t}g \in \langle f,g \rangle \mf{m}^4$ and that by lemma \ref{lemma_meromorphic}, this implies $r\in I(\omega_x)$.
We can then find a vector field $X = \sum_i{X_i \partial_{x_i}} + \partial_{t}$ such that $\omega(X)=0$ (note that $X_i\in\langle x_1,\ldots,x_n\rangle$ because there is no linear relation with constant coefficients between the leading terms of the components of $\omega_x$).
But this means that $h$ is constant along the trajectories of $X$ so that the flow $\varphi_s(x,t)$ of $X$ (which is defined on a neighborhood of $\{0\}\times[0,1]$) gives a diffeomorphism $\varphi:x\mapsto \varphi_1(x,0)$ such that $h_0\circ \varphi = h_1$ on $\Cg{n}$.
\end{proof}

\begin{cor}
Let $h$ be a $\ms{Q}$-generic quotient of Morse functions.
There exists $\lambda_i,\alpha_i \in \mb{C}^*$ such that $h$ is diffeomorphic to
\[
\frac{\sum_{i}{\lambda_i x_i^2+\alpha_i x_i^3}}{\sum_i{x_i^2}}.
\]
\end{cor}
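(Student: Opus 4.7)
The plan is to combine the normal form for $\ms{R}$-generic pairs (coming from the $\ms{R}$-classification of pairs of Morse functions via the tangency-curve data) with Theorem \ref{thm_meromorphic}. The key observation is that once a $\ms{Q}$-generic quotient has been put in a suitable normal form, the only thing that matters for its diffeomorphism type is the $3$-jets of the numerator and denominator.

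First I would use the definition of $\ms{Q}$-genericity: by definition there is a diffeomorphism $\varphi_0 \in \Df{n}$ such that $(f,g)\circ\varphi_0$ has the form $\tilde f = \sum_i x_i^2$, $\tilde g = \sum_i \lambda_i x_i^2 + \alpha_i x_i^3 + \varepsilon_i$ with $\lambda_i \neq \lambda_j$, $\alpha_i \neq 0$, and $\varepsilon_i \in \mC{n}^4$. Then $h \circ \varphi_0 = \tilde g/\tilde f$, so it suffices to prove the corollary for the normalized quotient $\tilde h = \tilde g/\tilde f$.

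Next, introduce the explicit model $\hat f = \sum_i x_i^2$ and $\hat g = \sum_i \lambda_i x_i^2 + \alpha_i x_i^3$, using the \emph{same} coefficients $\lambda_i, \alpha_i$. Clearly the pair $(\hat f,\hat g)$ has the normal-form shape demanded by $\ms{Q}$-genericity (with remainders equal to $0$), so $\hat h = \hat g/\hat f$ is a $\ms{Q}$-generic quotient of Morse functions. By construction $j^3 \tilde f = j^3 \hat f$ and $j^3 \tilde g = j^3 \hat g$. Theorem \ref{thm_meromorphic} then directly provides a diffeomorphism $\varphi_1 \in \Df{n}$ with $\tilde h \circ \varphi_1 = \hat h$.

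Composing, $h \circ (\varphi_0 \varphi_1) = \hat h$, which is exactly the conclusion of the corollary. There is no real obstacle here: the whole argument is a bookkeeping step that packages the $\ms{R}$-normal form together with Theorem \ref{thm_meromorphic}. The only mild point to verify is that the explicit model $(\hat f,\hat g)$ genuinely satisfies the hypotheses of $\ms{Q}$-genericity (which is immediate since its $3$-jet is already in the required form), so that the target pair of Theorem \ref{thm_meromorphic} qualifies.
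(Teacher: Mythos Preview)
Your argument is correct and is essentially the approach the paper has in mind: the corollary is stated without proof immediately after Theorem \ref{thm_meromorphic}, precisely because the definition of $\ms{Q}$-genericity places $(f,g)$ in the form $(\sum x_i^2,\ \sum \lambda_i x_i^2+\alpha_i x_i^3+O(\mf m^4))$, and then Theorem \ref{thm_meromorphic} compares this with the truncated model $(\sum x_i^2,\ \sum \lambda_i x_i^2+\alpha_i x_i^3)$. Your write-up just makes this bookkeeping explicit; the only cosmetic point is that the remainder should be a single term in $\mC{n}^4$ rather than indexed as $\varepsilon_i$, and $\lambda_i\in\mb{C}^*$ comes automatically from $g$ being Morse.
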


\begin{rmq}
Since the latter form is stable under homotecies, we can even suppose that $\alpha_1 = 1$.
\end{rmq}

\section{Restriction of a Morse function to a quadratic cone}
\label{sec_restrictions}

In this section, we want to study restrictions of Morse functions $g$ to a "quadratic cone" (ie. an hypersurface $\{f=0\}$ with $f$ also a Morse function).

\begin{rmq}
We can see by a cohomological argument that each function and each diffeomorphism defined on a quadratic cone extends to $\Cg{n}$ (respectively to a function or a diffeomorphism of $\Cg{n}$).
Thus, studying functions on a quadratic cone up to diffeomorphism of the cone is the same as studying functions of $\Cg{n}$ in restriction to a quadratic cone up to diffeomorphisms of $\Cg{n}$ fixing the cone.
\end{rmq}

\begin{thm}
Let $f$, $g_0$ and $g_1$ be three Morse functions with $(f,g_i)$ $\ms{R}$-generic pairs and equalities between the $2$-jets $j^2g_0=j^2g_1$.
Then there is a diffeomorphism $\varphi$ such that $f\circ \varphi=f$ and $g_0\circ \varphi = g_1$ in restriction to $\{f=0\}$.
\end{thm}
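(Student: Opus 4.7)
The plan is to mimic the key lemma's path method, requiring only that $g$ be preserved modulo $\langle f\rangle$ rather than identically. First, using Theorem~\ref{thm_functions} together with Proposition~\ref{prop_locus2}, I would normalize so that $f=\sum x_i^2$ and $\mr{Tang}(f,g_0)$ is the union of the coordinate axes; Proposition~\ref{prop_radical} then gives $I(f,g_0)=\langle x_ix_j\rangle_{i\neq j}$. The hypothesis $j^2g_0=j^2g_1$ survives this normalization and yields both $q_{g_1}=q_{g_0}=\sum\lambda_ix_i^2$ (with pairwise distinct $\lambda_i$) and $g_1-g_0\in\mC{n}^3$.

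Setting $g_t=g_0+t(g_1-g_0)$, I would look for a vector field $X=\sum X_i(t,x)\partial_{x_i}+\partial_t$ defined near $\{0\}\times[0,1]$ satisfying $X\cdot f=0$ exactly and $X\cdot g\in\langle f\rangle$; its time-$1$ flow then delivers a diffeomorphism $\psi$ with $f\circ\psi=f$ and $g_1\circ\psi=g_0$ on $\{f=0\}$, and $\varphi:=\psi^{-1}$ is the one sought. Writing $df\wedge d_xg=\sum_{i<j}h_{ij}\,dx_i\wedge dx_j$ and using the same formulas as in Lemma~\ref{key_lemma} (namely $X^j_i=r_{ij}\partial_{x_j}f$ for $i<j$, $X^j_j=-\sum_{i<j}r_{ij}\partial_{x_i}f$, $X^j_i=0$ for $i>j$, and $X=\sum_jX^j+\partial_t$), one obtains $X\cdot f=0$ and $X\cdot g=(g_1-g_0)-\sum_{i<j}r_{ij}h_{ij}$. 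So the problem reduces to producing a decomposition
\[
g_1-g_0 \;=\; s(t,x)\,f(x) \;+\; \sum_{i<j}r_{ij}(t,x)\,h_{ij}(t,x)
\]
with $s$ and $r_{ij}$ holomorphic on a neighborhood $U$ of $\{0\}\times[0,1]$ in $\mb{C}_t\times\mb{C}^n$.

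The main obstacle is producing this decomposition, and the critical ingredient will be that the ideal $J_t:=I(f,g_t)+\langle f\rangle\subset\OC{n}$ is independent of $t$. To see this I would pass to $\bar R:=\OC{n}/\langle f\rangle$: the leading term of $\bar h^{(t)}_{ij}$ is $4(\lambda_j-\lambda_i)\bar x_i\bar x_j$ independently of $t$, while the error $\bar h^{(t)}_{ij}-4(\lambda_j-\lambda_i)\bar x_i\bar x_j$ lies in $\bar{\mC{n}}\cdot\langle\bar x_k\bar x_l\rangle_{k\neq l}$, because a direct computation using $x_k^3=x_kf-\sum_{l\neq k}x_l(x_kx_l)$ (and analogous identities for the other degree-$3$ monomials) shows $\mC{n}^3\subset\langle f\rangle+\langle x_kx_l\rangle_{k\neq l}$. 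The matrix argument of Proposition~\ref{prop_radical}, now applied in the local ring $\bar R$, then yields $\langle\bar h^{(t)}_{ij}\rangle_{i<j}=\langle\bar x_i\bar x_j\rangle_{i\neq j}$ for every $t$, whence $J_t=J_0$. Combined with $g_1-g_0\in\mC{n}^3\subset J_0$, this gives $g_1-g_0\in J_{t_0}$ at every $t_0\in[0,1]$, and a coherent-sheaf argument identical to Proposition~\ref{prop_coherence}, applied to the ideal of $\mc{O}(U)$ generated by the $h_{ij}(t,x)$ together with $f$ over a Stein $U$, promotes this pointwise membership to the global decomposition displayed above. Finally, because $f$ is singular at $0$, every $X^j_i$ vanishes at the origin, so the flow of $X$ is defined on a neighborhood of $\{0\}\times[0,1]$ and produces the required $\psi$.
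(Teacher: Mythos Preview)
Your argument is correct and follows the same line as the paper's proof: set $g_t=g_0+t(g_1-g_0)$, seek a vector field with $X\cdot f=0$ and $X\cdot g\in\langle f\rangle$, and reduce everything to the inclusion $\mC{n}^3\subset\langle f\rangle+I(f,g_t)$, which you obtain exactly via the identity $x_k^3=x_kf-\sum_{l\neq k}x_l(x_kx_l)$ and the matrix argument of Proposition~\ref{prop_radical}. The paper states this inclusion in one line and leaves the $t$-uniformity implicit (it is covered by the machinery of the key lemma), whereas you spell out the constancy of $J_t$ and the coherent-sheaf globalization explicitly; this extra care is not a different method, just a more detailed execution of the same one.
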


\begin{proof}
Let $g_t = g_0 + t(g_1-g_0)$.
We want to find a diffeomorphism $\varphi$ such that $f\circ \varphi=f$ and $g_0\circ \varphi - g_1 \in \langle f\rangle$; we will use Moser's path method to find it as the flow of a vector field $X=\sum{X_i \partial_{x_i}}+\partial_{t} $ such that $X\cdot g\in \langle f\rangle$ and $X\cdot f=0$.
Note that we can find $X$ verifying $X\cdot g = X\cdot f = 0$ as soon as $\partial_{t}g\in I(f,g)$, so that we can find $X$ as sought as soon as $\partial_{t}g\in \langle f\rangle + I(f,g)$.
Remark that the components of $X-\partial_{t}$ will cancel on the $t$-axis because there is no linear relation with constant coefficients between $f$ and the components of $df\wedge dg$.

We saw in the proof of proposition \ref{prop_radical} that $I(f,g)=\langle x_ix_j + \ldots\rangle$, but $x_i^3$ is equal to $x_if$ modulo the ideal $I(f,g) + \mC{n}^4$ so that each monomial of degree $3$ belongs to $\langle f\rangle + I(f,g) + \mC{n}^4$.
Thus, the inclusion $\mC{n}^3\subset \langle f\rangle + I(f,g)$ holds so that $\partial_{t}g\in\langle f \rangle + I(f,g)$ and the proof is complete.
\end{proof}

\begin{rmq}
Note also that $g$ and $g+\lambda f$ represent the same function on $\{f=0\}$ so that we obtain the following:
\end{rmq}

\begin{cor}
Given a Morse function $f$, each Morse function $g$ such that the pair $(f,g)$ is $\ms{R}$-generic is diffeomorphic in restriction to $\{f=0\}$ to a quadratic function $\sum_{i=1}^{n-1}{\lambda_i x_i^2}$.
\end{cor}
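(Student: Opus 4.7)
The plan is to invoke the preceding theorem with $g_1$ chosen to be the quadratic part of $g$, and then to use the remark that functions differing by a multiple of $f$ coincide on $\{f=0\}$ in order to drop the last variable.

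First I would use Proposition \ref{prop_locus2} together with the $\ms{R}$-genericity of $(f,g)$ to reduce to the situation where $f = \sum_{i=1}^n x_i^2$ and $q_g = \sum_{i=1}^n \mu_i x_i^2$ with the $\mu_i$ pairwise distinct and nonzero. Setting $g_1 := q_g$, the pair $(f,g_1)$ is then itself $\ms{R}$-generic and shares its $2$-jet with $(f,g)$, so the preceding theorem produces a diffeomorphism $\varphi$ of $\Cg{n}$ satisfying $f\circ\varphi = f$ and $g\circ\varphi = q_g$ in restriction to $\{f=0\}$.

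The final step is to apply the remark: on the cone $\{f=0\}$ one has $q_g = q_g - \mu_n f = \sum_{i=1}^{n-1}(\mu_i - \mu_n) x_i^2$, so $g\circ\varphi$ agrees on $\{f=0\}$ with a quadratic form involving only the first $n-1$ coordinates, with nonzero coefficients $\lambda_i := \mu_i - \mu_n$ (nonzero precisely because $\mu_n \neq \mu_i$ for $i<n$). This is the claimed normal form.

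The only subtle point to guard against is that $\sum_{i<n} \lambda_i x_i^2$ is not itself a Morse function on $\Cg{n}$, so one cannot plug it directly into the preceding theorem in the role of $g_1$. Routing through $q_g$ (which is genuinely Morse and forms an $\ms{R}$-generic pair with $f$) and only afterwards applying the ``modulo $\langle f\rangle$'' remark sidesteps this obstacle, and is really the main content of the argument.
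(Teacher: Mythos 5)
Your proposal is correct and takes essentially the same route as the paper: the corollary is obtained there precisely by applying the preceding theorem with $g_0=g$ and $g_1=q_g$ (which share their $2$-jet and both form $\ms{R}$-generic pairs with $f$), and then invoking the remark that $g$ and $g+\lambda f$ represent the same function on $\{f=0\}$ to subtract $\mu_n f$ and eliminate the $x_n^2$ term. Your closing observation --- that the degenerate form $\sum_{i<n}\lambda_i x_i^2$ cannot itself serve as $g_1$ in the theorem, so one must pass through the Morse function $q_g$ first --- is exactly the role the paper's remark plays.
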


\section{Applications of the Key Lemma}
\label{sec_generalisation}

The key lemma can be used in a very general setting for the $\ms{R}$-classification of pairs of functions: although the hypotheses might seem strong, they are in fact necessary.
For example, it can be applied to rediscover the $\ms{R}$-classification of folds. 

\begin{df}
Define a fold to be a pair of functions $f,g : (\C{n},0)\rightarrow (\C{},0)$ such that $f$ is regular and $\mr{Tang}(f,g)$ is a simple smooth curve transverse to $\{f=0\}$.
\end{df}

\begin{thm}
\label{thm_folds}
Let $(f,g)$ be a fold on $\Cg{n}$.
There exists a unique function $\varphi\in \mc{O}(\C{},0)$ and a set of coordinates $(x_i)$ such that $f=x_1$ and $g=\varphi(x_1)+\sum_{i>1}{x_i^2}$.
\end{thm}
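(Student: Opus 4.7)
The strategy is to straighten $f$ and the tangency curve by two successive coordinate changes, then invoke the Key Lemma (via Remark~\ref{rmq_lemma}) to adjust the transverse behavior of $g$. Since $f$ is a submersion, the implicit function theorem provides coordinates with $f=x_1$. In these coordinates $df\wedge dg=\sum_{j>1}(\partial_{x_j}g)\,dx_1\wedge dx_j$, so $I(f,g)=\langle \partial_{x_j}g\rangle_{j>1}$ and $\mr{Tang}(f,g)$ is the common zero set of the $\partial_{x_j}g$. Being smooth and transverse to $\{x_1=0\}$, the curve is a graph $\{x_j=\xi_j(x_1)\}_{j>1}$, and $x_j\mapsto x_j-\xi_j(x_1)$ preserves $f=x_1$ while straightening it into the $x_1$-axis $T$. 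Define $\varphi(x_1):=g(x_1,0,\ldots,0)$ and set $g_1:=\varphi(x_1)+\sum_{j>1}x_j^2$; since the $\partial_{x_j}g$ ($j>1$) vanish on $T$, we have $g-\varphi(x_1)\in\langle x_2,\ldots,x_n\rangle^2$.

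Next I would apply the Key Lemma to $(f,g_0,g_1)$ with $g_0=g$. Reading \emph{simple} as ``$I(f,g)$ is radical'' (so that $\mr{Tang}(f,g)$ appears with multiplicity one), the invertibility argument of Proposition~\ref{prop_radical} forces the transverse Hessian of $g$ on $T$ to be nondegenerate and therefore $I(f,g)=\langle x_2,\ldots,x_n\rangle$. Since also $I(f,g_1)=\langle x_2,\ldots,x_n\rangle$, the tangency ideals coincide and $g_1-g_0\in\langle x_2,\ldots,x_n\rangle^2\subset I(f,g_0)$; the algebraic hypotheses of Lemma~\ref{key_lemma} thus hold, the only issue being that $f$ is regular at $0$. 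This is exactly the situation covered by Remark~\ref{rmq_lemma}: because $g_1-g_0$ lies in the \emph{square} of $I(f,g)$ whereas the generators $h_{1j}=\partial_{x_j}g_t$ already span $\langle x_2,\ldots,x_n\rangle$ for each $t$, one can decompose $g_1-g_0=\sum_{j>1}r_{1j}h_{1j}$ with every coefficient $r_{1j}\in\langle x_2,\ldots,x_n\rangle$, hence vanishing on $\{0\}\times[0,1]$. The vector field built in the proof of the Key Lemma then still cancels on the $t$-axis, its flow is defined near $\{0\}\times[0,1]$, and the time-$1$ map provides coordinates realizing the desired normal form.

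Uniqueness of $\varphi$ is automatic once existence is established: in any coordinates $\varphi$ coincides with the intrinsic function $g\vert_{\mr{Tang}(f,g)}\circ\left(f\vert_{\mr{Tang}(f,g)}\right)^{-1}$. The main technical obstacle lies in the two algebraic assertions of the second step — that $I(f,g)$ equals (not merely is contained in) $\langle x_2,\ldots,x_n\rangle$, and that an element of its square admits a decomposition with coefficients from the ideal itself. Both reduce to the nondegeneracy of the transverse Hessian on $T$ — the fold analogue of the genericity assumption used for pairs of Morse functions — and amount to an application of Nakayama's lemma to the Jacobian matrix of the $\partial_{x_j}g$ in the $x_j$ variables at points of $T$.
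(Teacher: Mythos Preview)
Your strategy matches the paper's: straighten $f$ and the tangency curve, identify $I(f,g)=\langle x_2,\ldots,x_n\rangle$, then invoke the Key Lemma through Remark~\ref{rmq_lemma}; the uniqueness argument is also the same.

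There is, however, one step the paper carries out that you skip. After obtaining $g-\varphi(x_1)\in\langle x_2,\ldots,x_n\rangle^2$, the paper first diagonalizes the transverse quadratic part $q$ by a linear change in $(x_2,\ldots,x_n)$ (which preserves $f=x_1$), so that $q=\sum_{j>1}x_j^2$ and the remainder $g-g_0$ lies in $\langle x_2,\ldots,x_n\rangle^2\,\mC{n}\subset\mC{n}^3$. You omit this and take $g_1=\varphi(x_1)+\sum x_j^2$ directly, so $g_1-g_0$ may have a nonzero quadratic part. Your claim that ``the generators $h_{1j}=\partial_{x_j}g_t$ already span $\langle x_2,\ldots,x_n\rangle$ for each $t$'' then fails in general: the linear part of $h_{1j}(t,\cdot)$ is the $j$-th partial of the quadratic form $(1-t)q+t\sum_{k>1} x_k^2$, which degenerates whenever $t/(t-1)$ is an eigenvalue of the matrix of $q$ (for instance $q=-x_2^2+\sum_{j>2}x_j^2$ gives a degeneration at $t=1/2$). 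At such a $t$ there is a $\C{}$-linear relation among the $h_{1j}$, and the argument forcing the coefficients $r_{1j}$ to vanish on $\{0\}\times[0,1]$ no longer goes through. After the paper's preliminary normalization the difference is of order three, the linear parts of the $h_{1j}$ are identically $2x_j$ for all $t$, no linear relation ever appears, and Remark~\ref{rmq_lemma} applies directly.
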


\begin{proof}
We can suppose without loss of generality that $f=x_1$ and that $\mr{Tang}(f,g)$ is the $x_1$-axis.
This means that $I(f,g) = \langle \partial_{x_i}g\rangle_{i>1} = \langle x_2,\ldots,x_n\rangle$ so $g = \varphi(x_1) + q(x_2,\ldots,x_n) + \varepsilon$ with $q$ a nondegenerate quadratic form and $\varepsilon\in \langle x_2,\ldots,x_n\rangle^2 \mC{n}$.
Since $q$ is nondegenerate, we can suppose $q = \sum_{i>1}{x_i^2}$.

We want to use the key lemma in $(\C{n},0)$ for $f = x_1$, $g_0 = \varphi(x_1) + x_2^2+\ldots+x_n^2$ and $g_1 = g$.
Let us check the hypotheses: first, $g_1-g_0 = \varepsilon\in\langle x_2,\ldots,x_n\rangle^2\mC{n}$.
Then, for each $a\in\langle x_2,\ldots,x_n\rangle^2\mC{n}$, the ideal $I(f,g_0+a)$ writes $\langle x_2+\eta_2,\ldots, x_n+\eta_n\rangle$ with $\eta_i \in \langle x_2,\ldots,x_n\rangle \mC{n}$, which means that $\mr{Tang}(f,g_0+a)$ is a simple curve and the ideal $I(f,g_0+a)$ is radical.
So the hypotheses $I(f,g_0)=I(f,g_1)$ and $g_1-g_0\in I(f,g_0)$ are verified, hence the only hypothesis missing is $f$ having a singular point.

But $g_1-g_0$ cancels at order $3$ at the origin, which will allow us to use the remark \ref{rmq_lemma}.
Indeed, if we use the same notations, the fact that there is no $\C{}$-linear relation between the generators of $I(f,g_0)$ implies that the coefficients $r_{ij}$ in the decomposition $g_1-g_0 = \sum{r_{ij}h_{ij}}$ cancel on $\{0\}\times[0,1]$. 
The lemma can thus be applied and the couples $(f,g_0)$ and $(f,g_1)$ are diffeomorphic.

Last, the function $\varphi$ is entirely determined by the equality $\varphi\circ f = g$ on $\mr{Tang}(f,g)$.
\end{proof}

A first corollary is the classification of regular folds as foliations (ie. the $\ms{F}$-classification):

\begin{cor}
Let $(\F,\G)$ be a pair of foliations on $\Cg{n}$ given by a fold $(f,g)$ with $g$ regular.
Then $(\F,\G)$ is diffeomorphic to the pair of foliations given by the first integrals $(x_1,x_1+\sum_{i>1}{x_i^2})$.
\end{cor}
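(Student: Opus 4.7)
The strategy is to apply Theorem \ref{thm_folds} to reduce $(f,g)$ to its normal form and then absorb the remaining freedom at the source via one explicit diffeomorphism. By Theorem \ref{thm_folds}, after a diffeomorphism of $\Cg{n}$ we may assume $f = x_1$ and $g = \varphi(x_1) + \sum_{i>1}x_i^2$ for a unique $\varphi \in \OC{1}$. Computing $dg(0) = \varphi'(0)\,dx_1$, the hypothesis that $g$ is regular forces $\varphi'(0) \neq 0$, so $\varphi \in \Df{}$.

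Now define $\Phi : \Cg{n} \to \Cg{n}$ by $\Phi(x_1, x_2, \ldots, x_n) = (\varphi(x_1), x_2, \ldots, x_n)$, whose Jacobian at the origin is the invertible diagonal matrix with entries $\varphi'(0), 1, \ldots, 1$, so $\Phi \in \Df{n}$. One immediately reads off $x_1 \circ \Phi = \varphi(x_1)$ and $\bigl(x_1 + \sum_{i>1} x_i^2\bigr) \circ \Phi = \varphi(x_1) + \sum_{i>1} x_i^2 = g$. Since $\varphi$ is a diffeomorphism of $\Cg{}$, the first equality shows that $x_1 \circ \Phi$ and $x_1$ define the same foliation, namely $\F$, while the second is literally the first integral of $\G$. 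Thus $\Phi$ pulls back the pair of foliations with first integrals $(x_1, x_1 + \sum_{i>1} x_i^2)$ to $(\F, \G)$, which is the desired equivalence.

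I do not expect any real obstacle here: all the analytic content is already carried by Theorem \ref{thm_folds}, and the role of the extra regularity hypothesis on $g$ is precisely to turn the modulus $\varphi$ into an invertible function, so that it may be absorbed by an explicit source change rather than surviving as an invariant of the pair of foliations.
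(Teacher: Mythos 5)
Your proof is correct and is essentially the paper's argument: the paper likewise invokes Theorem \ref{thm_folds}, notes that regularity of $g$ makes $\varphi$ a diffeomorphism, and passes to the variables $(\varphi(x_1),x_2,\ldots,x_n)$, which is exactly your explicit source diffeomorphism $\Phi$. You merely spell out the verification (that $\varphi(x_1)$ and $x_1$ define the same foliation $\F$ and that the second integral pulls back to $g$) that the paper leaves implicit.
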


\begin{proof}
We can suppose that $(f,g)$ are as in the conclusion of the theorem \ref{thm_folds}.
The hypothesis that $g$ be regular means that $\varphi$ is a diffeomorphism.
In the variables $(\varphi(x_1), x_2,\ldots,x_n)$ the pair $(\F,\G)$ is in the right form.
\end{proof}

We can also use this to obtain the $\ms{R}$-classification of generic pairs $(f,g)$ with $f$ regular and $g$ a Morse function: this is exactly when $\varphi$ is a Morse function.
In the case of $\ms{F}$-equivalence, we obtain the normal form $(x_1,\sum_{i\geq 1}{x_i^2})$.
\\

We could also study pairs $(f,g)$ of the form $(x^3+y^2+z^2,\lambda x^2+\mu y^2+ \nu z^2 + \ldots)$, but in this case the tangency ideal $I(f,g)$ will again be radical and this case will be similar to the case of pairs of Morse functions.
\\

The lemma \ref{key_lemma} can also be applied for more complicated cases, like for example when the ideal $I(f,g)$ is not radical.
To illustrate this, note that if we take $f=x^3+y^2+z^2$ and $g=\lambda x^3+\mu y^2+\nu z^2$ with $\lambda\neq\mu\neq\nu\neq 0$, the tangency ideal is $I(f,g) = \langle x^2y,x^2z,yz\rangle$ and corresponds to $D_x\cup 2D_y\cup 2D_z$ with $D_l$ the $l$-axis.
Let us classify pairs of functions that "look like" this pair.
First, recall the following:

\begin{prop}
Let $f$ be a function on $\Cg{3}$ having a singular point with Milnor number $2$ at the origin; then in a right set of coordinates, $f(x,y,z) = x^3+y^2+z^2$.
\end{prop}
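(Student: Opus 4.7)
The plan is to invoke the Splitting Lemma (also called the Generalized Morse Lemma, or Thom's Splitting Lemma): for any $f\in\mC{n}^2$ with Hessian of rank $r$ at the origin, there exist local coordinates $(y_1,\ldots,y_n)$ such that
\[ f = y_1^2+\cdots+y_r^2 + h(y_{r+1},\ldots,y_n)\]
with $h\in\mC{n-r}^3$. This is classical and can itself be proved by the same path method used in the key lemma: once one has chosen a maximal Morse direction, the obstruction to splitting off further quadratic terms lies in $\mC{}^3$, and one kills it by a vector field constructed from the non-degenerate block of the Hessian. The Milnor number is preserved by such analytic coordinate changes, and $\mu(f)=\mu(h)$ because the quadratic part contributes trivially to $\mc{O}/J$.

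Next I would determine the rank $r$ by a case analysis using the hypothesis $\mu(f)=2$. If $r=3$ then $f$ is Morse and $\mu(f)=1$, contradiction. If $r\le 1$ then $h\in\mC{n-r}^3$ with $n-r\ge 2$; in that case $J_h\subset\mC{n-r}^2$, so $\mc{O}_{n-r}/J_h$ surjects onto $\mc{O}_{n-r}/\mC{n-r}^2$, whose dimension is $1+(n-r)\ge 3$, contradicting $\mu(h)=2$. Hence $r=2$ and, after renaming, $f=y^2+z^2+h(x)$ with $h\in\mC{}^3$.

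It only remains to normalize $h(x)$. Write $h(x)=x^k u(x)$ with $u(0)\ne 0$, $k\ge 3$. Computing $\mu(h)=k-1$ (since $J_h=\langle x^{k-1}\rangle$ up to a unit), the hypothesis $\mu(f)=2$ forces $k=3$. Then the analytic coordinate change $\tilde x = x\,u(x)^{1/3}$ (a well-defined local diffeomorphism since $u(0)\ne 0$) transforms $h$ into $\tilde x^3$, yielding $f=\tilde x^3 + y^2 + z^2$ as desired.

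The main obstacle is really just the Splitting Lemma, which is a standard ingredient from singularity theory; the Milnor-number dimension count that pins down the rank, and the one-variable normalization of $h$, are both short and routine. No control over a versal unfolding is needed here because the problem is simply to identify the $A_2$ normal form from the invariant $\mu=2$.
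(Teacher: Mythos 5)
Your proposal is correct and follows essentially the same route as the paper: the Splitting Lemma you invoke is, in this corank-one situation, exactly the parametrized Morse lemma the paper uses to reach $f=\varphi(x)+y^2+z^2$, and the final normalization of the one-variable cubic is identical. If anything, your dimension count $\dim \mc{O}/J_h \geq 1+(n-r)$ pinning the Hessian rank at exactly $2$ fills in a step the paper merely asserts.
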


\begin{proof}
Since the Milnor number of $f$ is $2$, the hessian matrix of $f$ at $0$ is of rank $2$ and in the right set of coordinates, it can be written $diag(0,2,2)$.
Then $f(x,y,z) = y^2+z^2 + \varepsilon$ with $\varepsilon\in \mf{m}^3$ and $f$ can be seen as a deformation of $f(0,\cdot,\cdot)$ which has a non-degenerate singular point at $0$.
By the parametrized Morse lemma, there exists a function $\varphi$ and a set of coordinates such that $f(x,y,z) = \varphi(x) + y^2+z^2$.

Since the Milnor number of $f$ is $2$, $\varphi$ is diffeomorphic to $x^3$ and changing the coordinates once more, we can write $f(x,y,z) = x^3+y^2+z^2$.
\end{proof}

So in fact we are interested in pairs $(f,g)$ of functions with Milnor number $2$, having hessians $H(f)$, $H(g)$ which can be simultaneously diagonalized with the $0$ in the same spot.
For such functions, we can then suppose that 
\begin{equation}
\label{form_cusps}
f=x^3+y^2+z^2\quad \text{and}\quad g=\lambda x^3 + \mu y^2+\nu z^2 + \varepsilon
\end{equation}
with $\varepsilon\in\mf{m}^3$ which has no component in $x^3$.

The tangency locus might not be diffeomorphic to the union of one simple curve and two double curves: the double curves might split.
For example for $f=x^3+y^2+z^2$ and $g=x^3+\mu y^2+\nu z^2 + x^2y$, the $y$-axis splits into two curves tangent respectively to the $y$-axis and to the line $\{z=0=3(\mu-1)x-2y\}$.
Let us assume the double curves don't split.
We will call such a pair $(f,g)$ an exceptional pair of $3$-dimensional cusps (or an exceptional pair of cusps because we only deal with the $3$-dimensional ones in this example).

\begin{prop}
If $(f,g)$ is an exceptional pair of cusps written as in \eqref{form_cusps}, then $\mr{Tang}(f,g)$ is tangent and diffeomorphic to the union of the axes.
Moreover, the tangency curve tangent to the $x$-axis is tangent at order $2$ with the $x$-axis.
\end{prop}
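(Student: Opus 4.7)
The plan is to imitate the proof of Proposition~\ref{prop_locus1}: blow up the origin of $\Cg{3}$ and analyze the pulled-back tangency ideal in each of the three standard affine charts, extracting one tangency curve per chart.

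First, in the $x$-chart with coordinates $(x,a,b)$ and $y=xa$, $z=xb$, a direct computation gives that the generators $h_{xy},h_{xz},h_{yz}$ pull back to $x^3A,\,x^3B,\,x^2C$, where $A=6(\mu-\lambda)a+\cdots$, $B=6(\nu-\lambda)b+\cdots$ and $C=4(\nu-\mu)ab+\cdots$ with corrections coming from $\varepsilon$. The key algebraic observation will be the syzygy $aB-bA=\tfrac{3}{2}C$, which reduces the set-theoretic study of the tangency locus to $V(A,B)$ away from the exceptional divisor. Since the Jacobian of $(A,B)$ with respect to $(a,b)$ at the origin of the chart is $6\,\mr{diag}(\mu-\lambda,\nu-\lambda)$, invertible by the genericity hypothesis, the implicit function theorem yields a smooth curve $\{a=\alpha(x),\,b=\beta(x)\}$ with $\alpha,\beta\in\mC{}$. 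Pulling back to the source gives the strict transform $(x,\,x\alpha(x),\,x\beta(x))$, in which both $y=x\alpha(x)$ and $z=x\beta(x)$ vanish to order at least $2$ in $x$; this simultaneously establishes the existence of the $x$-axis tangency curve and its order-$2$ tangency with the $x$-axis.

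Next, in the $y$-chart $(\tilde a,y,\tilde b)$ with $x=y\tilde a$, $z=y\tilde b$, the pullback factors as $y^2\langle yA',yC',B'\rangle$ with an analogous syzygy $C'=\tilde bA'+\tfrac{3}{2}\tilde a^2B'$. Solving $B'=0$ for $\tilde b=\tilde b(\tilde a,y)$ by the implicit function theorem and substituting into $A'$ will produce a single defining equation $\Phi(\tilde a,y)=0$; the roots in $\tilde a$ of $\Phi(\tilde a,0)$ correspond to the tangent directions of the local components of the tangency subscheme near the $y$-axis. The no-splitting hypothesis on the $y$-double curve is to be interpreted as the statement that $\Phi(\tilde a,0)$ admits $\tilde a=0$ as its only root, so that $\Phi=0$ cuts out a single smooth curve through the origin of the $y$-chart, giving a smooth tangency curve tangent to the $y$-axis. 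The $z$-chart is handled symmetrically.

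Combining the three charts, $\mr{Tang}(f,g)$ is exhibited as the union of three smooth irreducible curves meeting at $0$ with tangents along the three coordinate axes, and a straightening argument in the spirit of Proposition~\ref{prop_locus2} then produces the announced diffeomorphism with the union of the axes. The main obstacle lies in the $y$- and $z$-chart analyses: formalizing the no-splitting hypothesis as the algebraic condition on $\Phi$, and then extracting a single smooth strict transform in spite of the higher multiplicity of the underlying scheme structure, is where the technical work will be concentrated.
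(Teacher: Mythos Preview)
Your blow-up strategy is sound and broadly parallel to the paper's, but the argument as written has a genuine gap in the order-$2$ tangency claim.

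In the $x$-chart you obtain $a=\alpha(x)$, $b=\beta(x)$ with $\alpha,\beta\in\mC{}$, hence $y=x\alpha(x),\,z=x\beta(x)$ vanish to order~$2$. But in this paper ``tangent at order~$2$'' means $y,z\in x^3\OC{}$ (this is exactly what is used in the next proposition, where $T_x$ is written as $y=x^3\alpha_2(x)$, $z=x^3\alpha_3(x)$). Your implicit-function step only yields $\alpha(0)=\beta(0)=0$; to get $\alpha'(0)=\beta'(0)=0$ you must know that the coefficients of $x^2y$ and $x^2z$ in $\varepsilon_3$ vanish. That information does \emph{not} come from the genericity $\lambda\neq\mu\neq\nu$; it comes from the no-splitting hypothesis, which you only invoke later in the $y$- and $z$-charts and never feed back into the $x$-chart. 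The paper handles this by first extracting from no-splitting the tangent-cone condition $\partial_x\varepsilon_3=0$ and only then analysing the $x$-direction via a double blow-up; you need some analogue of that step before you can claim the extra order of contact.

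There is a second, related issue in the $y$-chart. Your proposed formalisation of no-splitting as ``$\Phi(\tilde a,0)$ has $\tilde a=0$ as its only root'' is too weak: it forces $\Phi(\tilde a,0)=c\tilde a^2$, but Weierstrass preparation then gives $\Phi=(\tilde a^2+p(y)\tilde a+q(y))\cdot(\text{unit})$, which may still factor into two distinct smooth branches tangent to each other. The genuine no-splitting hypothesis is that this Weierstrass polynomial is a perfect square, and only then does $\Phi=0$ cut out a single (doubled) smooth curve. So ``a single smooth curve through the origin'' does not follow from your stated condition on $\Phi(\cdot,0)$; you must use the full hypothesis on $\Phi$, as the paper does when it analyses the singular points of $u|_{\{y=y_0\}}$ and invokes the assumption that the two points coincide for every $y_0$.
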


\begin{proof}
The proof is very similar to that of the proposition \ref{prop_locus1}.
We have:
\begin{align*}
df\wedge dg &= \left( 3x^2(2\mu y+\partial_{y}\varepsilon)-2y(3\lambda x^2+\partial_{x}\varepsilon) \right)dx\wedge dy +\\
& \left( 3x^2(2\nu z+\partial_{z}\varepsilon)-2z(3\lambda x^2 + \partial_{x}\varepsilon) \right)dx\wedge dz +\\
& \left( 2y(2\nu z+\partial_{z}\varepsilon)-2z(2\mu y + \partial_{y}\varepsilon) \right)dy\wedge dz.
\end{align*}

Introduce the $3$-jet $\varepsilon_3$ of $\varepsilon$.
Remember that $\varepsilon\in \mf{m}^3$ so that after projectivization, the tangency cone is given in $\mb{P}^2(\C{})$ by the system of equations
\[
\left\{\begin{aligned}
&4(\nu-\mu)yz\\
&2\left(3(\mu-\lambda)x^2-\partial_{x}\varepsilon_3 \right)y\\
&2\left(3(\nu-\lambda)x^2-\partial_{x}\varepsilon_3 \right)z\\
\end{aligned}\right.
\]

Therefore, the hypothesis that the tangency curves don't split implies that $\partial_{x}\varepsilon_3=0$, ie. $\varepsilon_3\in\langle y^3,y^2z,yz^2,z^3\rangle$, and in this case each component of the tangency locus is tangent to an axe.

To show that $\mr{Tang}(f,g)$ is indeed diffeomorphic to the union of the axes, blow up the origin.
If we blow up in the direction $y$ (it will be the same in the direction $z$), we obtain $\tilde{f}=y^2(yx^3+1+z^2)$ and $\tilde{g}=y^2(\lambda yx^3 + \mu + \nu z^2 + y\tilde{\varepsilon})$.
Hence, near the point $(0,0,0)$ we can make the change of coordinate $y\mapsto \sqrt{\tilde{f}}$ and obtain $\tilde{f}=y^2$, $\tilde{g}=y^2u$ with
\begin{align*}
u &= u_0(z^2) + yu_1(x^3,z)+O(y^2)\\
&= \left( \mu+(\nu-\mu)z^2+O(z^4) \right) + y \left(a+(\lambda-\mu)x^3+O(z) \right) + O(y^2)
\end{align*}
with $a$ depending on $\varepsilon$.
We see that on the exceptional divisor $E$, the equation $dy\wedge du=0$ is equivalent to $dy\wedge du_0=0$, that is $z=0$: it is a line and not a point as before.
But a tangency point is a singular point of $u\vert_{\{y=y_0\}}$ for some $y_0$ so we only need to search for the singular points of $u\vert_{\{y=y_0\}}$.
Note that
\[\partial_{z}u = 2(\nu-\mu)z + O(z^2) + O(y) \quad\text{and}\quad \partial_{x}u = y(x^2(3(\lambda-\mu)+\ldots)+O(y))\]
so, for $y_0$ near $0$, there are two singular points tending to $0$ as $y_0$ tends to $0$.
These points form a set intersecting $E$ at $0$ with multiplicity $2$, and our hypothesis that the double curve hasn't split shows that these two points are equal and that there is a double tangency curve tangent to the $y$-axis.

Next, if we blow up in the direction $x$, we obtain $\tilde{f}=x^2(x+y^2+z^2)$ and $\tilde{g}=x^2(\lambda x + \mu y^2 + \nu z^2 + x \tilde{\varepsilon})$.
We cannot make the desired change of coordinate near $(0,0,0)$ so blow up once more:
\[\hat{f}=x^3(1+x y^2+x z^2)\quad\text{and}\quad \hat{g}=x^3(\lambda + \mu xy^2+\nu xz^2 + x\hat{\varepsilon})\]
and call $E$ the exceptional divisor corresponding to the last blow up.
There we can make the change $x\mapsto \hat{f}^{1/3}$ and get $\hat{f}=x^3$, $\hat{g}=x^3u$.
Here, $u$ is $(\lambda+\mu xy^2+\nu xz^2 + x\hat{\varepsilon})(1+xy^2+xz^2)^{-1}+O(x^2)$ but remember that $\varepsilon_3\in\langle y^3,y^2z,yz^2,z^3\rangle$ so $\tilde{\varepsilon}$ has no constant term and $\hat{\varepsilon}$ is in fact divisible by $x$.
Thus
\[u=u_0 + O(x^2) \quad\text{with}\quad u_0=(\lambda+\mu xy^2 + \nu xz^2)(1+xy^2+xz^2)^{-1}.\]
But $du_0$ is null in restriction to $E$ so take away the constant term by replacing $\hat{g}$ by $\hat{g}-\lambda\hat{f}$: we can do this because $d\hat{f}\wedge d\hat{g} = d\hat{f}\wedge d(\hat{g}-\lambda\hat{f})$.
Finally, $\hat{g}-\lambda\hat{f} = x^4 v$ with $v = v_0 + O(x)$ and $v_0 = v_0(y,z)$ is a Morse function at $0$.
Thus the equation $df\wedge dv=0$ has a set of solution of dimension $1$ but only one solution on $E$: the solution of $df\wedge dv_0=0$.
Then there is one tangency curve tangent to the $x$-axis.
Note that this tangency curve wasn't separated from the $x$-axis after $2$ blow-ups so they are tangent at order (at least) two.

Finally, we need to show that there are no other tangencies.
We already studied the blow-up in the directions $y$ and $z$, so we only need to study the tangency locus near the $x$-axis.
Remember that near the $x$-axis, we have
\[
\tilde{f}=x^2(x+y^2+z^2)\quad\text{ and }\quad \tilde{g}=x^2(\lambda x + \mu y^2 + \nu z^2 +x \tilde{\varepsilon}).
\]
So
\begin{align*}
d\tilde{f}\wedge d\tilde{g}
&= x^3 \left[6(\mu- \lambda)xy + O(\mf{m}^3)\right] dx\wedge dy\\
&+ x^3 \left[6(\nu- \lambda)xz + O(\mf{m}^3) \right]dx\wedge dz\\
&+ x^4 \left[4(\nu-\mu) yz +O(\mf{m}^3) \right] dy\wedge dz
\end{align*}
where $\mf{m}=\langle x,y,z\rangle$.
Thus, near the intersection point between the $x$-axis and the exceptional divisor, after removing the powers of $x$, we see that each component of the tangency locus is tangent to one of the axes (in the blow-up coordinates) and that there can be at most one curve tangent to each axis.
But we have already shown that there is one tangency curve tangent to the $x$-axis, and we have found two curves inside the exceptional divisor solutions to $d\tilde{f}\wedge d\tilde{g}=0$, which were tangent to the $y$-axis and the $z$-axis.
Hence there can be no other tangencies around the $x$-axis.

\end{proof}

\begin{prop}
\label{prop_locus_cusp}
If $(f,g)$ is an exceptional pair of cusps in the form \eqref{form_cusps}, there exists a diffeomorphism $\varphi$ preserving $f$ such that $\mr{Tang}(f\circ \varphi, g\circ \varphi)$ is the union of the axes.
\end{prop}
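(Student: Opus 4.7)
The plan is to mimic Proposition~\ref{prop_locus2}: I will construct three diffeomorphisms of $\Cg{3}$ preserving $f$, each rectifying one tangency curve onto the corresponding axis while fixing pointwise a hyperplane containing the other two axes. Writing $D_x,D_y,D_z$ for the coordinate axes, I first build $\phi_3$ sending $T_x$ to $D_x$ and fixing $\{x=0\}$; then, using that $d\phi_3|_0$ will be the identity, I observe that $\phi_3(T_y)$ and $\phi_3(T_z)$ remain smooth curves tangent to $D_y$ and $D_z$ at order~$1$. Next, I construct $\phi_1$ straightening $\phi_3(T_y)$ to $D_y$ while fixing $\{y=0\}$, and finally $\phi_2$ straightening $\phi_1\phi_3(T_z)$ to $D_z$ while fixing $\{z=0\}$. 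The composition $\phi:=\phi_2\circ\phi_1\circ\phi_3$ will send $\mr{Tang}(f,g)$ onto $D_x\cup D_y\cup D_z$, and the desired diffeomorphism is $\varphi:=\phi^{-1}$.

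The construction of $\phi_3$ is the technical heart of the proof. By the previous proposition I can write $T_x=\{(x,\eta_y(x),\eta_z(x))\}$ with $\eta_y,\eta_z\in (x)^3\cdot\OC{}$, so that $\beta_y(x):=\eta_y(x)/x^3$ and $\beta_z(x):=\eta_z(x)/x^3$ are holomorphic. I search $\phi_3$ in the form
\[
\phi_3(x,y,z)=\bigl((1+u)x,\ y-\eta_y(x),\ z-\eta_z(x)\bigr)
\]
with unknown $u=u(x,y,z)$. Imposing $f\circ\phi_3=f$ and dividing through by $x^3$ reduces to
\[
(1+u)^3=1+2y\beta_y(x)+2z\beta_z(x)-x^3\bigl(\beta_y(x)^2+\beta_z(x)^2\bigr),
\]
whose right-hand side is holomorphic at $0$ with constant term $1$; the implicit function theorem then yields a unique $u\in\mC{3}$. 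A direct check gives $\phi_3(x,\eta_y(x),\eta_z(x))=((1+u)x,0,0)\in D_x$, $\phi_3(0,y,z)=(0,y,z)$, and $d\phi_3|_0=\mr{id}$ (using $\eta_y,\eta_z\in (x)^3$).

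The constructions of $\phi_1$ and $\phi_2$ will be direct transpositions of Proposition~\ref{prop_locus2}: parametrize $\phi_3(T_y)=\{(a(y),y,c(y))\}$ with $a,c\in (y)^2\cdot\OC{}$, search
\[
\phi_1(x,y,z)=\bigl(x-a(y),\ (1+v)y,\ z-c(y)\bigr),
\]
and determine $v\in\mC{3}$ from $f\circ\phi_1=f$ via the implicit function theorem; repeat symmetrically for $\phi_2$. The main obstacle is really the construction of $\phi_3$: the cubic term $x^3$ in $f$ forces a cube-root extraction in the equation for $u$, and the order-$2$ tangency of $T_x$ with $D_x$ is precisely what guarantees holomorphy of the right-hand side (order $1$ would leave the term $y\eta_y/x^3$ meromorphic along $\{x=0\}$). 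Once $\phi_3$ is in hand, the remaining ingredients---invertibility of each $\phi_i$, preservation of tangency orders under composition (thanks to $d\phi_i|_0=\mr{id}$), and the IFT argument for $v$ and its analogue in $\phi_2$---parallel the quadratic case verbatim.
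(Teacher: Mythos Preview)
Your proof is correct and follows essentially the same approach as the paper's: straighten each tangency curve onto its axis by a diffeomorphism of the special form $((1+u)x,\,y-\eta_y,\,z-\eta_z)$ (resp.\ its analogues for the $y$- and $z$-axes), solving for the scalar factor via the implicit function theorem. Your observation that the order-$2$ tangency of $T_x$ with $D_x$ is exactly what makes the right-hand side of the cubic equation holomorphic is the key point, and matches the paper's argument; the only cosmetic difference is that the paper treats the $y$- and $z$-cases first by direct appeal to Proposition~\ref{prop_locus2}, whereas you spell out that the cubic term $(x-a(y))^3$ still yields a right-hand side divisible by $y^2$.
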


\begin{proof}
We will follow the same reasoning as in the proof of proposition \ref{prop_locus2}: call $D_l$ the $l$-axis ($l=x,y,z$) and $T_l$ the tangency curve tangent to $D_l$.
We want to show that there exists a diffeomorphism $\varphi$ fixing $\{x=0\}$ and sending $T_x$ to $D_x$.
For $l=y,z$, finding a diffeomorphism fixing $\{l=0\}$ and sending $T_l$ to $D_l$ can be done exactly as in the proposition \ref{prop_locus2}, but for $l=x$ a little change must be done:
the curve $T_x$ is tangent to $D_x$ at order $2$ so it has equations $y=x^3 \alpha_2(x)$, $z=x^3 \alpha_3(x)$.
We will search $\varphi$ in the form
\[\varphi(x,y,z) = ((1+u)x,y-x^3 \alpha_2(x), z-x^3 \alpha_3(x))\]
where $u$ is an unknown holomorphic function.
We need to have
\[(1+3u+3u^2+u^3)x^3 + y^2 + z^2 - 2x^3 \left(y \alpha_2(x) + z \alpha_3(x) \right) + x^6 \left(\alpha_2(x)^2 + \alpha_3(x)^2 \right) = x^3+y^2+z^2,\]
that is
\[3u+3u^2+u^3 = 2 \left(y \alpha_2 +z \alpha_3 \right) - x^3 \left(\alpha_2^2 + \alpha_3^2 \right).\]
The implicit function theorem gives a solution $u\in \mC{3}$ and the desired diffeomorphism $\varphi$ follows.
\end{proof}

Since the ideal is not radical, the tangency locus is not sufficient to characterize the ideal.
The following proposition gives a geometric description of the ideal; it might be interesting in other contexts because it hints at something more general: the characterization of any ideal in terms of cancellation of functions and cancellation of some differential operators on these functions.
But I couldn't find mention of such a characterization anywhere, so we only give the following special case:

\begin{prop}
\label{prop_chp_vect}
Let $(f,g)$ be an exceptional pair of cusps in the form \eqref{form_cusps} with $\mr{Tang}(f,g)$ equal to the union of the axes.
Then there is a vector field $X$ such that $X(0)=\partial_{x}$ and
\[I(f,g) = \left\{a\in\OC{3} \text{ such that } a\vert_{T_x}=a\vert_{T_y}=a\vert_{T_z}=0 \text{ and } (X\cdot a)\vert_{T_y}=(X\cdot a)\vert_{T_z}=0 \right\}.\]
Such a vector field will be said to characterize the tangency ideal.
\end{prop}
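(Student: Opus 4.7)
The plan is to identify the "doubled direction" of the primary decomposition of $I(f,g)$ along each of the axes $T_y$ and $T_z$, glue these into a global holomorphic vector field $X$ with $X(0)=\partial_x$, and then verify the two inclusions $I(f,g) \subseteq J$ and $J \subseteq I(f,g)$, where $J$ denotes the ideal on the right-hand side.

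Because $\mr{Tang}(f,g)$ is the union of the three axes, $I(f,g)$ admits a primary decomposition $\mf{p}_{T_x}\cap Q_y\cap Q_z$ with $Q_y,Q_z$ primary of multiplicity $2$ along $T_y,T_z$. At a generic point $(0,y_0,0)\in T_y$, the leading cubic parts $6(\mu-\lambda)x^2y$, $6(\nu-\lambda)x^2z$, $4(\nu-\mu)yz$ of $h_{xy},h_{xz},h_{yz}$ show that $Q_y$ is locally of the form $\langle W^2,U\rangle$ for transverse coordinates $W=x+O(\mf{m}^2)$, $U=z+O(\mf{m}^2)$, so the vector field $\partial/\partial W$ on $T_y$ tends to $\partial_x$ as $y_0\to 0$. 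A symmetric analysis on $T_z$ produces a doubled-direction field also tending to $\partial_x$ at $0$. Both therefore glue into a single holomorphic vector field $X = \partial_x + b\partial_y + c\partial_z$ with $b,c\in\mC{3}$ realizing the doubled directions on $T_y$ and $T_z$.

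For $I(f,g)\subseteq J$: any $a\in I(f,g)$ vanishes on the axes, and near a generic point of $T_y$ it writes $a= p\,W^2 + q\,U$. Since $X|_{T_y}$ is proportional to $\partial/\partial W$, it is tangent to $\{U=0\}$ and annihilates $U$ on $T_y$, giving $(X\cdot a)|_{T_y}=0$; the same holds on $T_z$. Hence $a\in J$.

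The main obstacle is the converse $J\subseteq I(f,g)$. Take $a\in J$ and expand in local coordinates near a generic point of $T_y$ as $a=\sum a_{ij}(y)\,W^iU^j$. The condition $a|_{T_y}=0$ kills $a_{00}$ and $(X\cdot a)|_{T_y}=0$ kills $a_{10}$, so $a\in \langle W^2, U\rangle = I(f,g)$ locally. The symmetric argument works along $T_z$, and along $T_x$ the ideal $I(f,g)$ is locally radical ($\langle y,z\rangle$), so the vanishing on $T_x$ alone suffices. To extend the equality to the origin, note that $I(f,g)$ is the ideal of $2\times 2$ minors of the Jacobian matrix of $(f,g)$ and that $V(I(f,g))$ has codimension $2$; by Hilbert--Burch (or the Eagon--Northcott complex), $\OC{3}/I(f,g)$ is Cohen--Macaulay of pure dimension $1$, so it has no submodule supported at $\{0\}$. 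Combined with the coherence of the sheaves associated to $I(f,g)$ and $J$ (as in proposition \ref{prop_coherence}), the equality $J_p=I(f,g)_p$ on a punctured neighborhood of $0$ extends to $0$, giving $J=I(f,g)$.
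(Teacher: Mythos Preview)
Your proof is correct and takes a genuinely different route from the paper's. Your construction of $X$ via the doubled direction of the primary component $Q_y$ actually coincides with the paper's tangent-plane-to-$\{h_3=0\}$ construction (since $h_3=h_{yz}$ is a regular generator of $Q_y$ modulo $\langle x,z\rangle^2$ at generic points of $T_y$), but the paper goes further and uses the non-splitting hypothesis to prove explicitly that $I(f,g)=\langle x^2y,\,x^2z,\,h_3\rangle$, whereas you bypass any such normal form by working locally near generic points of each axis. The main divergence is in the inclusion $J\subseteq I(f,g)$: the paper argues by leading homogeneous degree, ruling out the monomials $xy^k$ and $xz^k$ via a direct calculation with $X$; you instead verify the inclusion pointwise off the origin and then invoke the Cohen--Macaulayness of $\OC{3}/I(f,g)$ (via Hilbert--Burch for the $2\times 2$ Jacobian minors, which applies since the tangency locus has the expected codimension $2$) to rule out any discrepancy supported at $0$. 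Your argument is more structural and explains \emph{why} no embedded component at the origin can arise, while the paper's is entirely elementary and self-contained. One small presentational point: your opening line asserts the primary decomposition $\mf{p}_{T_x}\cap Q_y\cap Q_z$ without an embedded component at $0$, which strictly speaking already presumes the unmixedness you only justify at the end; this is harmless since you never actually use the global decomposition, only the local form of $I(f,g)$ near generic points of the axes, but it would read more cleanly to defer that claim.
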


\begin{proof}
In the computations done before, we saw that $I(f,g)$ is spanned by the functions $h_1=x^2y+O(\mf{m}^4)$, $h_2=x^2z+O(\mf{m}^4)$ and $h_3=yz+O(\mf{m}^3)$
Note that the the tangent cone at $0$ of the variety $\{h_3=0\}$ is the union of the planes $\{y=0\}$ and $\{z=0\}$.
Moreover, we know by hypothesis that $h_3(T_y) = h_3(T_z) = \{0\}$ so for each $z$ near $0$, there is a unique plane tangent to $\{h_3=0\}$ at the point $(0,0,z)$.
This plane contains the direction $T_z$ so it is defined by another direction $X(z)$ which we can choose regular in $z$ with $X(0)=\partial_{x}$.
Similarly, the tangent plane to $\{h_3=0\}$ along $T_y$ is defined by a vector field along $T_y$ which we can choose so that both vector fields can be extended to a vector field $X$ on $\Cg{3}$ with $X(0) = \partial_{x}$.

Now let
\[J=\{a\in \OC{3} \text{ such that } a\vert_{T_x}=a\vert_{T_y}=a\vert_{T_z}=0 \text{ and } (X\cdot a)\vert_{T_y}=(X\cdot a)\vert_{T_z}=0 \}.\]
The set $J$ is an ideal and we first need to show that $I(f,g)\subset J$, ie. that $(X\cdot h_i)\vert_{T_l}=0$ for $i=1,2,3$ and $l=y,z$.
By construction, $(X\cdot h_3)\vert_{T_y}$ and $(X\cdot h_3)\vert_{T_z}$ are null.
Next, we know that $h_1\in \langle xy,yz,zx \rangle$ so up to changing $h_1$ by $h_1 - \sum_{i=2,3}{\lambda_i h_i}$ with $\lambda_i\in \mC{3}$, we can suppose that $h_1 = ux^2y + x \alpha(y) + x \beta(z)$ with $u$ invertible, $\alpha$ and $\beta$ in $\mC{}^3$.

The condition that the tangency curves do not split implies that when cutting the curve $T_y$ by a plane $y=y_0$, we obtain a point with multiplicity $2$.
But if $\alpha\neq 0$, then $\alpha(y_0)$ is generically invertible and $h_1(\cdot,y_0,\cdot)$ is generically regular.
The function $h_3(\cdot,y_0,\cdot)$ is also generically regular, so if $\alpha\neq 0$, we obtain a simple point; hence $\alpha=0$.
By the same reasons, $\beta=0$ and $I(f,g) = \langle x^2y,h_2,h_3 \rangle$.
Similarly, $I(f,g) = \langle x^2y,x^2z,h_3 \rangle$ and it is now clear that $I(f,g)\subset J$.

For the converse, we will show that $(x^2y, x^2z, h_3)$ generate $J$: suppose $a\in J$ and $P$ is his leading homogeneous polynomial (and let $k+1$ be his degree).
Since $J\subset \langle xy,yz,zx \rangle$, $P$ has no term in $l^{k+1}$ for $l=x,y$ or $z$.
The only terms that are not spanned by the leading coefficients of $x^2y,x^2z$ or $h_3$ are the $xl^k$ for $l=y,z$.
But if $X=(1+a_1)\partial_{x}+a_2 \partial_{y}+ a_3 \partial_{z}$, then $X\cdot xy^k = (1+a_1)y^k + ka_2xy^{k-1}$ is not nul on $T_y$: there can't be such a term in $P$.
Therefore $(x^2y,x^2z,h_3)$ generate $J$ and $I(f,g)=J$.
\end{proof}

\begin{prop}
If $(f,g)$ is an exceptional pair of cusps in the form \eqref{form_cusps}, there exists a diffeomorphism $\varphi$ preserving $f$ such that $I(f\circ \varphi, g\circ \varphi) = \langle x^2y,x^2z,yz \rangle$.
\end{prop}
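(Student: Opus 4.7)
The plan is to combine Propositions \ref{prop_locus_cusp} and \ref{prop_chp_vect} with an explicit straightening of the characterizing vector field.

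First, by Proposition \ref{prop_locus_cusp}, I may assume that $\mr{Tang}(f,g) = D_x \cup D_y \cup D_z$, the union of the coordinate axes, while still $f = x^3+y^2+z^2$. Proposition \ref{prop_chp_vect} then supplies a vector field $X$ with $X(0) = \partial_x$ that characterizes $I(f,g)$. A direct check shows that $J := \langle x^2y, x^2z, yz\rangle$ has the union of the axes as zero locus and is characterized in exactly the same sense by the vector field $\partial_x$; more generally, any vector field $Y$ with $Y(0)=\partial_x$ whose restriction to $D_y$ (resp.\ $D_z$) has no $\partial_z$-component (resp.\ no $\partial_y$-component) gives the same characterization of $J$, and this characterization uniquely determines $J$ among ideals whose zero locus is the union of the axes. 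Since pullback by a diffeomorphism transports the characterization of $I(f,g)$ to one of $I(f\circ\varphi,g\circ\varphi)$ with $X$ replaced by $\varphi^{*}X$, it suffices to construct $\varphi$ preserving $f$ and each axis such that $\varphi^{*}X$ has no $\partial_z$-component along $D_y$ and no $\partial_y$-component along $D_z$.

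To build $\varphi$, I would search in the form
\[\varphi(x,y,z)=\bigl(x+u,\,y+v,\,z+w\bigr),\qquad u,v,w\in\mC{3},\]
subject to the vanishing conditions on each axis that enforce axis preservation. The equation $f\circ\varphi=f$ is a single nonlinear scalar equation to which the implicit function theorem applies along the lines of Propositions \ref{prop_locus2} and \ref{prop_locus_cusp}. At a point $(0,y_0,0)\in D_y$ this equation forces $\varphi$ to fix $D_y$ pointwise; the Jacobian of $\varphi$ then has a block form in which the off-diagonal entries $\partial_x w(0,y_0,0)$ and $\partial_z u(0,y_0,0)$ remain free, and a short computation shows that the $\partial_z$-component of $\varphi^{*}X$ at $(0,y_0,0)$ vanishes iff these entries satisfy a single linear equation involving the components of $X(0,y_0,0)$. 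A symmetric statement holds along $D_z$.

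The main difficulty will be to satisfy the linear straightening conditions along $D_y\cup D_z$ simultaneously with the nonlinear equation $f\circ\varphi=f$ and the axis-preservation conditions. The strategy is to first fix the $1$-jets of $(u,v,w)$ along $D_y$ and $D_z$ so that the straightening holds, then extend $(u,v,w)$ to a neighborhood of $0$ in $\C{3}$ and close the equation $f\circ\varphi=f$ using the implicit function theorem, as in the proofs of Propositions \ref{prop_locus2} and \ref{prop_locus_cusp}. Once $\varphi$ is in hand, the identity $I(f\circ\varphi,g\circ\varphi)=J$ is immediate from the characterization in Proposition \ref{prop_chp_vect}. One may then optionally apply Lemma \ref{key_lemma} to further replace $g\circ\varphi$ by the explicit model $\lambda x^3+\mu y^2+\nu z^2$, but this is not needed for the present statement.
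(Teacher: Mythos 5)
Your reduction is essentially the paper's, and your one genuine improvement is correct: the characterization of $J=\langle x^2y,x^2z,yz\rangle$ from Proposition \ref{prop_chp_vect} is insensitive to the $\partial_x$-component and to components tangent to the axes, so it suffices to kill the $\partial_z$-component of $\varphi^*X$ along $D_y$ and the $\partial_y$-component along $D_z$ (this is the same freedom the paper exploits in the final theorem, replacing $X$ by $\lambda X+\mu Y$ with $Y$ tangent to the tangency locus). But the heart of the proposition is the \emph{existence} of $\varphi$, and your proposal does not construct it: you name the difficulty --- making the jet conditions along $D_y\cup D_z$ coexist with $f\circ\varphi=f$ --- and defer it to an implicit-function-theorem step ``as in Propositions \ref{prop_locus2} and \ref{prop_locus_cusp}''. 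That deferral hides a real obstruction. Writing $X\vert_{D_y}=(1+A)\partial_x+B\partial_y+C\partial_z$ and $\varphi=\mr{id}+(u,v,w)$ fixing the axes pointwise, the vanishing of the third component of $(d\varphi)^{-1}(X\circ\varphi)$ at a point of $D_y$ reads $\det\bigl(d\varphi(\partial_x),\partial_y,X\bigr)=0$, i.e.
\[
(1+\partial_x u)\,C=(1+A)\,\partial_x w \quad\text{on } D_y,
\]
which involves only the first column of the Jacobian --- in particular $\partial_z u$, which you cite as a free entry, does not enter at all, a sign the computation wasn't done. More seriously, the condition couples $\partial_x u$ and $\partial_x w$, and the IFT corrections used in Propositions \ref{prop_locus2} and \ref{prop_locus_cusp} (multiplying one coordinate by $1+u$) perturb exactly these entries along the axes; so ``first fix the $1$-jets, then close $f\circ\varphi=f$ by IFT'' can destroy in the second step what the first step arranged, and you give no mechanism preventing this.

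The paper's proof supplies precisely that mechanism: an explicit ansatz handled one axis at a time, namely shears $\varphi_1(x,y,z)=(x+yxa(y),\,y+yxb(y),\,z+yxc(y))$ whose image of $\partial_x$ along $D_y$ realizes any restriction $X\vert_{D_y}$ with $X(0)=\partial_x$, followed by the symmetric map for $D_z$; the decoupling comes from the fact that $\varphi_1$ fixes $\{y=0\}$ and preserves the $y$-axis, so the second step does not disturb the first. (Checking exact $f$-preservation for these shears is itself not immediate, which is one more reason your relaxed condition is the right formulation --- but then it must be paired with an explicitly $f$-preserving family.) For instance, writing $y^2+z^2=(y+iz)(y-iz)=:YZ$, the maps $(x,Y,Z)\mapsto\bigl(x,(1+x\beta)Y,\,Z/(1+x\beta)\bigr)$ preserve $f$ exactly and fix the axes pointwise, and prescribing $\beta\vert_{x=0}$ on the lines $\{Y=Z\}$ and $\{Y=-Z\}$ adjusts the $\partial_z$-component of $\varphi^*X$ along $D_y$ and the $\partial_y$-component along $D_z$ independently, which is all your relaxed condition requires. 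Some such concrete family is indispensable: as written, your argument establishes that the straightening conditions \emph{suffice}, but not that they can be \emph{met}, so the essential content of the proposition is missing.
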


\begin{proof}
By the proposition \ref{prop_locus_cusp} we can suppose that the tangency locus is the union of the axes.
By the proposition \ref{prop_chp_vect} we can find a vector field $X$ such that $X(0)= \partial_{x}$ characterizing the tangency ideal.
We want to transform $X$ into $\partial_{x}$ using a diffeomorphism $\varphi$ preserving $f$ and the coordinate axes.

As before we will construct $\varphi$ in two steps by transforming the vector field first on the $y$-axis and then on the $z$-axis.
We will search the first diffeomorphism in the form $\varphi_1(x,y,z) = (x+yxa(y) ,y+yxb(y),z+yxc(y))$, so that
\[ \varphi_1^*\partial_{x} = (1+ya,yb,yc). \]
We see that for each vector field $X$ tangent to $\partial_{x}$ at $0$, its restriction to the $y$-axis can be obtained this way.
Note that $\varphi_1$ fixes $\{y=0\}$ and preserves the $y$-axis so that if we do the same construction for the $z$-axis, the newly constructed diffeomorphism $\varphi_2$ will preserve the vector field along the $y$-axis.
Hence $\varphi=\varphi_2 \varphi_1$ will conjugate $I(f,g)$ with $\langle x^2y,x^2z,yz \rangle$.
\end{proof}

\begin{thm}
Let $(f_0,g_0)$ and $(f_1,g_1)$ be two exceptional pairs of cusps on $\Cg{3}$ with tangency curves $T^i_j$ ($i=0,1$, $j=1,2,3$ and $T_1^i$ is the simple one).
Suppose that there is a diffeomorphism $\psi$ conjugating the tangency curves and the restrictions $(f_i\vert_{T^i_j},g_i\vert_{T^i_j})$.
Then there exists a diffeomorphism $\varphi$ such that $(f_0\circ \varphi,g_0\circ \varphi) = (f_1,g_1)$.
\end{thm}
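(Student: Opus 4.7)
The strategy mirrors that of Theorem \ref{thm_functions}: normalize the two pairs as much as possible, then invoke the key lemma. First I would normalize each pair separately, using three of the propositions of this section: the proposition on the Morse form for functions with Milnor number $2$ puts both $f_0$ and $f_1$ in the canonical form $f = x^3 + y^2 + z^2$; Proposition \ref{prop_locus_cusp} brings each tangency locus to the union of the three coordinate axes, which we label $T_1$ (the $x$-axis, the simple component) and $T_2, T_3$ (the $y$- and $z$-axes, the double components); the last proposition of the section then arranges $I(f, g_0) = I(f, g_1) = \langle x^2 y, x^2 z, yz \rangle$. These three steps are compatible since each is performed by a diffeomorphism preserving what was achieved before.

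Next I would use the hypothesis to match $g_0$ and $g_1$ on each tangency curve. After the normalizations above, the original $\psi$ induces on each axis $T_j$ a local diffeomorphism preserving $f|_{T_j}$; since $f|_{T_1} = x^3$ and $f|_{T_j}$ is a square for $j = 2, 3$, such diffeomorphisms are respectively cube roots of unity on $T_1$ and sign changes on $T_2, T_3$. Each of these can be realized by a global linear symmetry of $\Cg{3}$ preserving $f$, every axis and the ideal $\langle x^2 y, x^2 z, yz \rangle$, and the potential swap $T_2 \leftrightarrow T_3$ is realized by $(x, y, z) \mapsto (x, z, y)$. Composing with the appropriate such symmetry brings us to the situation $g_0|_{T_j} = g_1|_{T_j}$ for $j = 1, 2, 3$.

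It then remains to verify the hypotheses of the key lemma \ref{key_lemma}. Equality of the tangency ideals holds by construction. For $g_1 - g_0 \in \langle x^2 y, x^2 z, yz \rangle$, I would invoke the characterization of Proposition \ref{prop_chp_vect} with the vector field $X = \partial_x$, which one checks directly to characterize the canonical ideal. Matching on the axes gives that $g_1 - g_0$ vanishes along $T_1 \cup T_2 \cup T_3$. For the derivative condition on $T_2$ and $T_3$, the tangency generator $3x^2 \partial_y g_i - 2y \partial_x g_i$ lies in $\langle x^2 y, x^2 z, yz \rangle$, and its restriction to $T_2$ reduces to $-2y\,\partial_x g_i|_{T_2}$; since the ideal vanishes identically on $T_2$, we conclude $\partial_x g_i|_{T_2} = 0$ for both $i = 0, 1$, and symmetrically $\partial_x g_i|_{T_3} = 0$. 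Thus $\partial_x(g_1 - g_0)$ vanishes on both double curves and the key lemma produces the desired diffeomorphism $\varphi$.

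The main obstacle I anticipate is the bookkeeping in the matching step: after performing three independent normalizations, the hypothesized $\psi$ no longer acts literally on the reduced pairs, so one must verify (exactly as in the proof of Theorem \ref{thm_functions}) that the hypothesis still provides, on each axis, a conjugacy of the restricted form described above, up to the $T_2 \leftrightarrow T_3$ swap. Once this is settled, the remainder of the argument follows the established pattern and is essentially mechanical.
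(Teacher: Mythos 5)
Your proof is correct and follows the paper's overall architecture exactly: normalize via the three propositions of the section, match $g_0$ and $g_1$ on the tangency locus, establish $g_1-g_0\in I=\langle x^2y,x^2z,yz\rangle$ through the vector-field characterization of Proposition \ref{prop_chp_vect}, and finish with the key lemma (here $f=x^3+y^2+z^2$ is singular at $0$, so no appeal to Remark \ref{rmq_lemma} is needed). The one place you genuinely diverge is the central membership step. The paper keeps the characterizing field $X$ abstract: it observes that $\lambda X+\mu Y$ still characterizes $I$ when $Y$ is tangent to $T$ and $\lambda$ does not vanish on $T$, uses this freedom to put $X$ inside $\mathrm{Ker}(df_0)$ at every point of $T$ (possible since $\mathrm{Ker}(df_0)$ is transverse to $T$ away from the origin), and then the very definition of the tangency locus forces $X\cdot g_i=0$ on $T$, giving both conditions of Proposition \ref{prop_chp_vect} at once. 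You instead fix $X=\partial_x$ and extract $\partial_x g_i\vert_{T_2}=\partial_x g_i\vert_{T_3}=0$ by restricting the tangency generators $3x^2\partial_y g_i-2y\,\partial_x g_i$ and $3x^2\partial_z g_i-2z\,\partial_x g_i$ to the double curves; this computation is valid. Be aware, though, that your route carries one extra obligation the paper's does not: Proposition \ref{prop_chp_vect} only asserts that \emph{some} $X$ with $X(0)=\partial_x$ characterizes the ideal, so your claim that $\partial_x$ itself characterizes $\langle x^2y,x^2z,yz\rangle$ must be checked --- it does hold, either by rerunning the leading-polynomial argument of that proposition with $a_1=a_2=a_3=0$, or by noting that $\partial_x$ is an admissible choice in its construction since the tangent planes to $\{yz=0\}$ along the $y$- and $z$-axes both contain $\partial_x$. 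On the other side of the ledger, you make explicit the matching step (cube roots of unity on $T_1$, sign changes on $T_2,T_3$, realized by linear symmetries of $f$ preserving the ideal) that the paper compresses into ``we can suppose $g_0=g_1$ in restriction to $T$''; your bookkeeping there is sound. Net effect: the paper's verification is coordinate-free and transfers to other normal forms, while yours is more concrete but tied to the explicit normalization.
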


\begin{proof}
After what has been done before, we can suppose that each couple is in the form \eqref{form_cusps}, with tangency ideals $I = \langle x^2y,x^2z,yz\rangle$, with $f_0=f_1$ everywhere and $g_0=g_1$ in restriction to the tangency locus $T$.

Let $X$ be a vector field characterizing the ideal $I$.
If $Y$ is tangent to $T$, then $\lambda X + \mu Y$ also characterizes $I$ for all $\lambda,\mu\in \OC{3}$ with $\lambda$ not vanishing on $T$ so we can suppose that $X \in \mr{Ker}(df_0)$ at every point of $T$ (note that $\mr{Ker}(df_0)$ is transverse to $T$ at each point different from the origin).
By definition of the tangency locus, $X$ then also belongs to the kernel of $dg_i$ for each $i$ on $T$, hence $g_1-g_0\in I$.

The key lemma can then be applied to finish the proof of this theorem.
\end{proof}

\end{document}